\journal{Journal of Number Theory}
\newtheorem{theorem}{Theorem}[section]
\newtheorem{lemma}[theorem]{Lemma}
\newtheorem{corollary}[theorem]{Corollary}
\newtheorem{proposition}[theorem]{Proposition}
\newtheorem{remark}[theorem]{Remark}
\theoremstyle{definition}
\newtheorem{definition}[theorem]{Definition}
\newtheorem{conjecture}[theorem]{Conjecture}
\newenvironment{proof}{\smallskip\noindent{\sc{Proof.}}}{\hfill$\square$\medskip}
\def\notdivides{\mathrel{\kern-3pt\not\!\kern3.5pt\bigm|}}
\begin{document}

\begin{frontmatter}

\title{On the coefficients of the cyclotomic polynomials of order three}

\author{Jia Zhao\corauthref{cor}},
\ead{zhaojia@mails.tsinghua.edu.cn} \corauth[cor]{Corresponding
author. Department of Mathematical Sciences, Tsinghua University,
Beijing 100084, China}
\author{Xianke Zhang}.

\ead{xzhang@math.tsinghua.edu.cn}

\begin{abstract}{We say that a cyclotomic polynomial $\Phi_{n}(x)$ has order three if $n$ is the product
of three distinct primes, $p<q<r$. Let $A(n)$ be the largest
absolute value of a coefficient of $\Phi_{n}(x)$ and $M(p)$ be the
maximum of $A(pqr)$. In 1968, Sister Marion Beiter [3, 4]
conjectured that $A(pqr)\leqslant \frac{p+1}{2}$. In 2008, Yves
Gallot and Pieter Moree [8] showed that the conjecture is false for
every $p\geqslant 11$, and they proposed the Corrected Beiter
conjecture: $M(p)\leqslant \frac{2}{3}p$. Here we will give a
sufficient condition for the Corrected Beiter conjecture and prove
it when $p=7$.}
\end{abstract}

\begin{keyword}
Cyclotomic polynomial; Ternary cyclotomic polynomial; Beiter's
conjecture; Corrected Beiter conjecture
\end{keyword}
\end{frontmatter}

\section{Introduction}
The $n$th cyclotomic polynomial is the monic polynomial whose roots
are the primitive $n$th roots of unity and are all simple. It is
defined by
$$\Phi_{n}(x)=\prod_{\substack{1\leqslant a\leqslant n\\(a,n)=1}}(x-e^{\frac{2\pi
ia}{n}})=\sum_{i=0}^{\phi(n)}c_{i}x^{i}.\eqno(1.1)$$ The degree of
$\Phi_{n}$ is $\phi(n)$, where $\phi$ is the Euler totient function.
It is known that the coefficients $c_{i}$, where $0\leqslant
i\leqslant \phi(n)$, are all integers.
\begin{definition}
$$A(n)=\max\{|c_{i}|,0\leqslant
i\leqslant \phi(n)\}.\eqno(1.2)$$
\end{definition}
For $n<105$, $A(n)=1$. It was once conjectured that this would hold
for all $n$, however $A(105)=2$. Note that 105 is the smallest
positive integer that is the product of three distinct odd primes.
In fact, it is easy to prove that $A(p)=1$ and $A(pq)=1$ for
distinct primes $p,q$. Besides, we have the following useful
propositions.
\begin{proposition}
The nonzero coefficients of $\Phi_{pq}(x)$ alternate between $+1$
and $-1$.
\end{proposition}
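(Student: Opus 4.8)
The plan is to start from the product formula for cyclotomic polynomials. Writing $x^{m}-1=\prod_{d\mid m}\Phi_{d}(x)$ for $m=p,q,pq$ (and $\Phi_{p}=\frac{x^{p}-1}{x-1}$, etc.) gives
$$\Phi_{pq}(x)=\frac{(1-x^{pq})(1-x)}{(1-x^{p})(1-x^{q})}.$$
I would then expand $\frac{1}{(1-x^{p})(1-x^{q})}=\sum_{m\geqslant 0}N(m)x^{m}$, where $N(m)=\#\{(a,b)\in\Z_{\geqslant 0}^{2}:ap+bq=m\}$, and use that $\deg\Phi_{pq}=\phi(pq)=pq-p-q+1<pq$, so the factor $1-x^{pq}$ is irrelevant in degrees below $pq$. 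Comparing coefficients yields
$$c_{k}=N(k)-N(k-1)\qquad(0\leqslant k\leqslant\phi(pq)),$$
with the convention $N(-1)=0$; in particular $c_{0}=1$.

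The main point is the elementary bound $N(m)\in\{0,1\}$ for $0\leqslant m<pq$. If $ap+bq=a'p+b'q$ with all entries in $\Z_{\geqslant 0}$, then $q\mid(a-a')$ since $\gcd(p,q)=1$; were $a\neq a'$ this would force $\max(a,a')\geqslant q$ and hence $m\geqslant pq$, a contradiction. Therefore $c_{k}\in\{-1,0,1\}$ for every $k$.

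It then remains to read off the sign pattern. By the displayed formula, $c_{k}=+1$ exactly when the $0/1$-valued sequence $\bigl(N(k)\bigr)_{k\geqslant -1}$ jumps from $0$ to $1$ at $k$, $c_{k}=-1$ exactly when it drops from $1$ to $0$, and $c_{k}=0$ when $N(k)=N(k-1)$. Since such up-steps and down-steps necessarily alternate along any $0/1$ sequence, and the sequence begins at $N(-1)=0$, the nonzero $c_{k}$ alternate between $+1$ and $-1$, starting with $+1$ (the constant term) and, since $\Phi_{pq}$ is monic, ending with $+1$.

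I expect the only genuine content to be the counting estimate $N(m)\leqslant 1$ for $m<pq$; granting that, the alternation is a formal property of the differences of consecutive terms of a $0/1$ sequence. The bookkeeping worth double-checking is the boundary behaviour at $k=0$ and on the range $\phi(pq)<k<pq$, where the formula must return $0$ in accordance with $\deg\Phi_{pq}=\phi(pq)$, but this is routine.
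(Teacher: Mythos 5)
Your proof is correct and complete. The paper itself offers no argument for this proposition, deferring instead to Lenstra [11], so there is nothing to compare line by line; your route is the standard generating-function one. The identity $\Phi_{pq}(x)=\frac{(1-x^{pq})(1-x)}{(1-x^{p})(1-x^{q})}$ is right, the representation-count $N(m)\leqslant 1$ for $m<pq$ is correctly justified via $\gcd(p,q)=1$, and the observation that the nonzero first differences of a $\{0,1\}$-valued sequence starting at $0$ must alternate in sign (beginning with $+1$) is exactly the point that yields the alternation. The boundary check you flag is indeed routine: $\phi(pq)=pq-p-q+1<pq$, so the $x^{pq}$ and $x^{pq+1}$ terms of the numerator never interfere on the range $0\leqslant k\leqslant\phi(pq)$. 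It is worth noting that your description $c_{k}=N(k)-N(k-1)$ is the same mechanism underlying the explicit Lam--Leung classification quoted later as Theorem 3.1: the exponents with $d_{m}=1$ are precisely those where $N$ steps up, namely $m=up+vq$ with $u\in[0,p_{q}^{*}-1]$, $v\in[0,q_{p}^{*}-1]$. So your argument is not only valid but also dovetails with the machinery the paper actually uses downstream.
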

\begin{proposition} Let $p$ be a prime.\\
If $p\,|\,n$, then $\Phi_{pn}(x)=\Phi_{n}(x^{p})$, so $A(pn)=A(n)$.\\
If $p\nmid n$, then $\Phi_{pn}(x)=\Phi_{n}(x^{p})/\Phi_{n}(x)$.\\
If $n$ is odd, then $\Phi_{2n}(x)=\Phi_{n}(-x)$, so $A(2n)=A(n)$.
\end{proposition}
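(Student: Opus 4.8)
The plan is to deduce all three identities from the fundamental factorization
\[
x^{n}-1=\prod_{d\mid n}\Phi_{d}(x),
\]
which is immediate from (1.1) since each $n$th root of unity is a primitive $d$th root of unity for exactly one divisor $d$ of $n$, together with its M\"obius-inverted form $\Phi_{n}(x)=\prod_{d\mid n}(x^{d}-1)^{\mu(n/d)}$, an identity in the field of rational functions $\Q(x)$. Granting the three polynomial identities, the assertions about $A$ are immediate: $\Phi_{n}(x^{p})$ has exactly the same nonzero coefficients as $\Phi_{n}(x)$, merely reindexed at the exponents $pi$, so $A(pn)=A(n)$ when $p\mid n$; and $\Phi_{n}(-x)=\sum_{i}(-1)^{i}c_{i}x^{i}$, so $A(2n)=A(n)$.

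For the first two parts, put $x\mapsto x^{p}$ to obtain $\Phi_{n}(x^{p})=\prod_{d\mid n}(x^{pd}-1)^{\mu(n/d)}$, and compare with $\Phi_{pn}(x)=\prod_{e\mid pn}(x^{e}-1)^{\mu(pn/e)}$; only the divisors $e$ of $pn$ with $pn/e$ squarefree contribute. If $p\mid n$ then $p^{2}\mid pn$, forcing $p\mid e$, so $e=pd$ with $d\mid n$ and $\mu(pn/e)=\mu(n/d)$; hence $\Phi_{pn}(x)=\prod_{d\mid n}(x^{pd}-1)^{\mu(n/d)}=\Phi_{n}(x^{p})$. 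If $p\nmid n$ then each relevant $e$ is uniquely $e=p^{\varepsilon}d$ with $\varepsilon\in\{0,1\}$ and $d\mid n$, and multiplicativity of $\mu$ gives $\mu(pn/e)=\mu(p^{1-\varepsilon})\mu(n/d)$, namely $\mu(n/d)$ for $\varepsilon=1$ and $-\mu(n/d)$ for $\varepsilon=0$; separating the two sub-products yields $\Phi_{pn}(x)=\bigl(\prod_{d\mid n}(x^{pd}-1)^{\mu(n/d)}\bigr)\bigl(\prod_{d\mid n}(x^{d}-1)^{-\mu(n/d)}\bigr)=\Phi_{n}(x^{p})/\Phi_{n}(x)$. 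A degree check ($p\,\phi(n)$ in the first case, $\phi(n)+\phi(pn)=\phi(n)+(p-1)\phi(n)$ in the second) confirms no factors are lost.

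For the third part, assume $n>1$ odd (for $n=1$ one has instead $\Phi_{2}(x)=-\Phi_{1}(-x)$, the sign coming from the leading term). Expanding $\Phi_{2n}(x)=\prod_{e\mid 2n}(x^{e}-1)^{\mu(2n/e)}$ and writing each divisor uniquely as $e=2^{\varepsilon}d$ with $d\mid n$, the same splitting together with $x^{2d}-1=(x^{d}-1)(x^{d}+1)$ makes the powers of $x^{d}-1$ cancel, leaving $\Phi_{2n}(x)=\prod_{d\mid n}(x^{d}+1)^{\mu(n/d)}$. On the other hand every $d\mid n$ is odd, so $(-x)^{d}-1=-(x^{d}+1)$, whence $\Phi_{n}(-x)=(-1)^{\sigma}\prod_{d\mid n}(x^{d}+1)^{\mu(n/d)}$ with $\sigma=\sum_{d\mid n}\mu(n/d)=\sum_{e\mid n}\mu(e)=0$ because $n>1$. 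Therefore $\Phi_{n}(-x)=\Phi_{2n}(x)$.

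None of these steps is difficult; the only places needing care are the bookkeeping with $\mu$ when peeling off the prime $p$ (or $2$) from the rest of $n$, the degree and leading-coefficient consistency checks, and the exclusion of $n=1$ in the last part --- harmless here, where $n=pqr$ is a product of three distinct odd primes. If one prefers to use only the definition (1.1), each identity can instead be proved by checking that the two monic polynomials involved have the same set of simple roots: for instance, when $p\mid n$ the roots of $\Phi_{n}(x^{p})$ are the $p$th roots of the primitive $n$th roots of unity, which are precisely the primitive $pn$th roots of unity; the M\"obius route simply handles all three cases at once.
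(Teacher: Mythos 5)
Your proof is correct. Note that the paper itself gives no argument for this proposition at all --- it simply cites Lenstra [11] --- so there is no internal proof to compare against; what you have written is the standard complete derivation. Your route via M\"obius inversion of $x^{n}-1=\prod_{d\mid n}\Phi_{d}(x)$ handles all three identities uniformly, and the details check out: the vanishing of $\mu(pn/e)$ when $p\mid n$ but $p\nmid e$, the splitting of divisors of $pn$ as $p^{\varepsilon}d$ when $p\nmid n$, the cancellation of the $(x^{d}-1)$ factors in the $2n$ case, and the sign $(-1)^{\sigma}$ with $\sigma=\sum_{e\mid n}\mu(e)=0$ for $n>1$ (which works even though some exponents $\mu(n/d)$ equal $-1$, since $(-1)^{-1}=(-1)^{1}$). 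You are also right to flag the $n=1$ exception to the third identity, where $\Phi_{2}(x)=-\Phi_{1}(-x)$; as you say, this is irrelevant to the paper's use of the proposition, where $n$ is always a product of distinct odd primes. The deduction of the statements about $A$ from the polynomial identities is immediate, as you note.
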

\begin{proof}
See [11] for details.
\end{proof}

By the proposition above, it suffices to consider squarefree values
of $n$ to determine $A(n)$. For squarefree $n$, the number of
distinct odd prime factors of $n$ is the order of the cyclotomic
polynomial $\Phi_{n}$. Therefore the cyclotomic polynomials of order
three are the first non-trivial case with respect to $A(n)$. We also
call them ternary cyclotomic polynomials.

Assume $p<q<r$ are odd primes, Bang [2] proved the bound
$A(pqr)\leqslant p-1$. This was improved by Beiter [3, 4], who
proved that $A(pqr)\leqslant p-\lfloor\frac{p}{4}\rfloor$, and made
the following conjecture.
\begin{conjecture}[Beiter]$A(pqr)\leqslant \frac{p+1}{2}$.
\end{conjecture}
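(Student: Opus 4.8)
The plan is to convert the statement into a sharp estimate for a single coefficient and then to control the cancellation that must occur. It suffices to treat $n=pqr$ squarefree, and since $\Phi_{pqr}$ is palindromic it is enough to bound $|c_j|$ for $0\le j\le\phi(pqr)/2$. Because $r\nmid pq$, the proposition giving $\Phi_{pn}(x)=\Phi_n(x^p)/\Phi_n(x)$ yields $\Phi_{pqr}(x)=\Phi_{pq}(x^r)/\Phi_{pq}(x)$. Writing $\Phi_{pq}(x)=\sum_i b_i x^i$ (so $b_i\in\{-1,0,1\}$ with nonzero values alternating in sign, by the alternation proposition), I would expand the inverse as a power series: from $1/\Phi_{pq}(x)=(1-x^p)(1-x^q)/\big((1-x)(1-x^{pq})\big)$ one checks that $1/\Phi_{pq}(x)=\sum_m d_m x^m$ with $d_m\in\{-1,0,1\}$ depending only on $m\bmod pq$, namely $d_m=+1$ when $m\bmod pq\in[0,p)$, $d_m=-1$ when $m\bmod pq\in[q,q+p)$, and $d_m=0$ otherwise. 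Multiplying the two series gives the exact convolution
$$c_n=\sum_{i} b_i\,d_{n-ir},$$
in which every factor lies in $\{-1,0,1\}$ and the two active residue blocks of $d$ each have length exactly $p$.

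The second step is to exploit this block structure together with the alternation of the $b_i$. Splitting $d$ into its positive and negative blocks gives $c_n=S_0-S_q$, where $S_a$ sums $b_i$ over those $i$ for which $n-ir$ falls, modulo $pq$, into the length-$p$ window starting at $a$. I would group the indices contributing to each $S_a$ into maximal runs on which $d_{n-ir}$ is constant and invoke the alternation property: since the nonzero $b_i$ alternate between $+1$ and $-1$, the partial sum of $b_i$ over any such run is $0$ or $\pm1$. Counting the runs through the period-$pq$ behaviour of $i\mapsto n-ir$ already recovers Bang's bound $|c_n|\le p-1$ and, with slightly more care, Beiter's $p-\lfloor\frac{p}{4}\rfloor$. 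The target $\frac{p+1}{2}$ then amounts to showing that the positive block $S_0$ and the negative block $S_q$ cannot both be extremal at the same $n$: roughly half of the potential contributions must cancel once the relative position of the two length-$p$ windows, governed by the shift $q$ and by $r\bmod pq$, is taken into account.

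The main obstacle is precisely this last quantitative cancellation. The difficulty is that the index sets feeding $S_0$ and $S_q$ are arithmetic progressions modulo $pq$ rather than intervals, so the alternating signs of the $b_i$ are encountered in a scrambled order determined by $r^{-1}\bmod pq$ and by the Lam--Leung data of $\Phi_{pq}$; bounding how many sign agreements can occur simultaneously in both blocks, uniformly in $q$ and $r$, is the heart of the matter. My plan is therefore to isolate an explicit \emph{sufficient condition} on these residues under which the number of uncancelled terms is at most $\frac{p+1}{2}$, reducing the conjecture for a fixed $p$ to a finite verification over the residue classes modulo $pq$. For $p=7$ this finite check is small enough to carry out completely, which would establish the bound in that case; whether the sufficient condition can be shown to hold for all admissible $q,r$ once $p$ is large is the genuinely hard and most delicate point of the whole approach.
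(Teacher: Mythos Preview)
The statement you are attempting to prove is Beiter's \emph{conjecture}, and the paper does not prove it; it records that Gallot and Moree showed the conjecture to be \emph{false} for every prime $p\geqslant 11$. Consequently the ``genuinely hard and most delicate point'' you flag at the end cannot be carried out: for $p\geqslant 11$ there provably exist $q,r$ for which the cancellation you hope for between $S_0$ and $S_q$ simply does not occur, and $A(pqr)>\frac{p+1}{2}$. Your convolution formula and the description of the coefficients of $1/\Phi_{pq}$ are correct, but the target inequality is not a theorem, so no completion of your outline is possible in general.

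Even restricted to $p=7$, your proposed reduction ``to a finite verification over the residue classes modulo $pq$'' is not finite, since $q$ ranges over infinitely many primes. The paper's proof of $M(7)=4$ proceeds differently. It first applies Bzd\c{e}ga's bound $A(pqr)\leqslant\min\{2\alpha+\beta^{*},\,p-\beta^{*}\}$, where $\alpha$ and $\beta^{*}$ depend only on $q_{p}^{*}$ and $r_{p}^{*}$, i.e.\ on residues modulo~$p$ (not modulo~$pq$); for $p=7$ this already yields $A(7qr)\leqslant 4$ unless $\alpha=\beta^{*}=2$, which forces $q,r\equiv\pm 3\pmod 7$. That residual case is handled not by enumeration but by the paper's main new result (Theorem~3.3): under the hypothesis $\min\{\overline{q_{p}},\,p-\overline{q_{p}},\,\overline{r_{p}},\,p-\overline{r_{p}}\}>\frac{p-1}{3}$ one has $A(pqr)\leqslant\frac{p+\beta^{*}}{2}$, and here the hypothesis is exactly $q,r\equiv\pm 3\pmod 7$, giving $A(7qr)\leqslant\frac{9}{2}<5$. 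Thus the finiteness that makes $p=7$ tractable lives modulo~$p$, and the decisive step is a structural argument with the characters $\chi_{mr}$ and the Lam--Leung description of $\Phi_{pq}$, not a residue-by-residue check.
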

Beiter proved her conjecture for $p\leqslant 5$ and also in case
either $q$ or $r\equiv\pm 1\pmod{p}$ [3]. If this conjecture holds,
it is the strongest possible result of this form. This is because
M\"{o}ller [12] indicated that for any prime $p$ there are
infinitely many pairs of primes $q<r$ such that $A(pqr)\geqslant
\frac{p+1}{2}$. Define
$$M(p)=\max\{A(pqr)\mid p<q<r\},$$ where the prime $p$ is fixed, and $q$ and
$r$ are arbitrary primes. Now with M\"{o}ller's result, we can
reformulate Beiter's conjecture.
\begin{conjecture}
For $p>2$, we have $M(p)=\frac{p+1}{2}$.
\end{conjecture}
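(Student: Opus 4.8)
The equality $M(p)=\frac{p+1}{2}$ stated here is, as the preceding sentence announces, a \emph{reformulation} of Beiter's conjecture, so what there is to prove is the equivalence: granting Möller's result, the equality $M(p)=\frac{p+1}{2}$ holds for every odd prime $p$ if and only if $A(pqr)\leqslant\frac{p+1}{2}$ for all odd primes $p<q<r$. The plan is to split the equality into the two one-sided bounds $M(p)\leqslant\frac{p+1}{2}$ and $M(p)\geqslant\frac{p+1}{2}$ and deal with them separately. Before that I would record that $M(p)$ is well-defined: by Bang's bound every value $A(pqr)$ lies in $\{1,\dots,p-1\}$, so $\{A(pqr)\mid p<q<r\}$ is a nonempty subset of a finite set and its maximum is attained; and since $p$ is odd, $\frac{p+1}{2}$ is an integer, so the proposed value even has the right shape.

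The upper bound is immediate from the definition of $M(p)$: the inequality $M(p)\leqslant\frac{p+1}{2}$ says precisely that $A(pqr)\leqslant\frac{p+1}{2}$ for every admissible triple $p<q<r$, that is, that Beiter's conjecture holds for $p$. The lower bound, on the other hand, needs no hypothesis: Möller's result furnishes, for each prime $p$, infinitely many (hence at least one) pairs of primes $q<r$ with $A(pqr)\geqslant\frac{p+1}{2}$, and taking the maximum over all such pairs gives $M(p)\geqslant\frac{p+1}{2}$ unconditionally. Combining the two halves, $M(p)=\frac{p+1}{2}$ holds exactly when the upper bound $M(p)\leqslant\frac{p+1}{2}$ does, which is Beiter's conjecture — and this is the asserted reformulation.

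The point to appreciate is that there is essentially no obstacle \emph{within this equivalence}: it is bookkeeping with the definition of $M(p)$, one appeal to Möller for the lower bound, and one appeal to Bang for well-definedness. All of the real difficulty is concentrated in the upper-bound half $M(p)\leqslant\frac{p+1}{2}$, which \emph{is} Beiter's conjecture; and, as the abstract recalls, that half is in fact false for every $p\geqslant 11$, so the equivalence is genuinely conditional and the reformulated statement itself fails in general. It is precisely this upper bound — now with the weaker constant $\frac{2}{3}p$ in place of $\frac{p+1}{2}$, which is the Corrected Beiter conjecture — that the remainder of the paper is devoted to.
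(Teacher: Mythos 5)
Your proposal is correct and matches the paper's own (one-line) justification: the statement is a conjecture, not a theorem, and the only thing to verify is that the reformulation is legitimate, which the paper does exactly as you do — Möller's result supplies the unconditional lower bound $M(p)\geqslant\frac{p+1}{2}$, so the equality is equivalent to the original upper-bound conjecture of Beiter. Your additional remarks on well-definedness of $M(p)$ via Bang's bound and on the falsity of the statement for $p\geqslant 11$ are accurate but not needed for the equivalence itself.
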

However, Gallot and Moree [8] showed that Beiter's conjecture is
false for every $p\geqslant 11$. For $p=7$, it is still an open
problem. In this paper, we will give an answer. Based on extensive
numerical computations, they gave many counter-examples and proposed
the Corrected Beiter conjecture.
\begin{conjecture}[Corrected Beiter conjecture]
We have $M(p)\leqslant \frac{2}{3}p$.
\end{conjecture}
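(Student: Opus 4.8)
The plan is to fix the least prime $p$ and reduce the determination of $M(p)$ to a finite combinatorial optimization that no longer involves $q$ and $r$ individually, then to bound the resulting quantity by $\frac{2}{3}p$ uniformly in $p$. The engine is the identity
$$\Phi_{pqr}(x)=\frac{\Phi_{pq}(x^{r})}{\Phi_{pq}(x)},$$
which is Proposition 1.3 applied to the new prime $r$ and $n=pq$. By Proposition 1.2 the nonzero coefficients of $\Phi_{pq}$ are $\pm1$, so $\Phi_{pq}(x^{r})=\sum_{j}\sigma_{j}x^{rj}$ is a sparse $\{0,\pm1\}$ polynomial supported on multiples of $r$. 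Expanding $1/\Phi_{pq}(x)=\frac{(1-x^{p})(1-x^{q})}{(1-x)(1-x^{pq})}$ as a formal power series $\sum_{m\geqslant0}b_{m}x^{m}$, a short computation shows that $b_{m}$ is $pq$-periodic, equal to $+1$ when $m\bmod pq\in\{0,\dots,p-1\}$, to $-1$ when $m\bmod pq\in\{q,\dots,q+p-1\}$ (two disjoint blocks, since $q>p-1$ and $q+p-1<pq$), and $0$ otherwise. Convolving, the $n$-th coefficient of $\Phi_{pqr}$ is the signed sum $c_{n}=\sum_{j}\sigma_{j}\,b_{\,n-rj}$, and since $\Phi_{pqr}$ is self-reciprocal it suffices to control $c_{n}$ for $n\leqslant\phi(pqr)/2$.

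First I would make the residue dependence explicit. Because $b_{m}$ depends on $m$ only modulo $pq$, while the support positions $j$ of $\Phi_{pq}$ are controlled, through the Lam--Leung description underlying Proposition 1.2, by the inverse of $q$ modulo $p$, a reduction of the double sum yields the periodicity statement: for $q,r$ large relative to $p$, the height $A(pqr)$ is a function $A_{p}(s,t)$ of the residues $q\equiv s$, $r\equiv t\pmod{p}$ alone. This collapses the supremum defining $M(p)$ to a maximum over the finitely many pairs $(s,t)$ with $1\leqslant s,t\leqslant p-1$, so that the Corrected Beiter conjecture becomes the uniform inequality $A_{p}(s,t)\leqslant\frac{2}{3}p$.

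Second, I would package $A_{p}(s,t)$ as the extremal imbalance of an explicit signed lattice-point count. Writing the support of $\Phi_{pq}$ as the two Lam--Leung rectangles and translating the congruence $n-rj\equiv(\text{block})\pmod{pq}$ back into conditions on the indices exhibits each $c_{n}$ as a difference of two interval-overlap counts, each of which counts indices $j$ in an interval with $rj$ in a prescribed residue block modulo $pq$; the endpoints of these intervals are linear in the residues $s,t$. Then $A_{p}(s,t)=\max_{n}|c_{n}|$ is the largest such difference, a one-parameter family of signed sums once the residues are fixed.

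The hard part is the uniform estimate $A_{p}(s,t)\leqslant\frac{2}{3}p$ for all residue pairs and all $p$, and capturing it is what a full proof demands. Bounding the positive and negative contributions to $c_{n}$ separately only recovers Bang's $p-1$, so one must exploit the cancellation between the two overlap counts directly. My strategy is to use the symmetries of $A_{p}(s,t)$ — invariance under swapping $s\leftrightarrow t$ (the symmetry of $\Phi_{pqr}$ in $q$ and $r$), together with the sign symmetry $(s,t)\mapsto(p-s,p-t)$ of the residue problem — to cut the search for the maximum down to a fundamental region of pairs, and then to locate and prove optimality of the extremal configuration, which the computations of Gallot and Moree place at specific residue relations showing the constant $\frac{2}{3}$ to be essentially sharp. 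Reducing the inequality to a sharp two-dimensional ballot-type estimate on the correlated overlaps is the decisive step. I expect the main obstacle to be proving optimality of that configuration \emph{uniformly} in $p$: no monotonicity or convexity of $A_{p}(s,t)$ in the residues is yet available, so controlling the worst case as $p$ grows — rather than merely enumerating the finitely many pairs for a fixed small prime such as $p=7$ — is precisely the point at which a proof for every prime currently stalls.
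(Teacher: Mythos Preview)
The statement you are addressing is stated in the paper as a \emph{conjecture}, not a theorem; the paper does not prove it. What the paper actually establishes is a sufficient condition (Theorem~3.3 and Corollary~3.4): if $\min\{\overline{q_{p}},\,p-\overline{q_{p}},\,\overline{r_{p}},\,p-\overline{r_{p}}\}>\frac{p-1}{3}$ then $A(pqr)\leqslant\frac{2}{3}p$, and from this together with Bzd\c{e}ga's bound it deduces the single case $M(7)=4$. A proof of the full Corrected Beiter conjecture for every prime $p$ would be a new result, not a reproduction of anything in the paper.

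Your proposal is also not a proof, as you concede in the last paragraph, but more importantly the central reduction step is wrong. You assert that for $q,r$ large relative to $p$ the height $A(pqr)$ is a function $A_{p}(s,t)$ of the residues $q\equiv s$, $r\equiv t\pmod{p}$ alone, and that this collapses $M(p)$ to a finite maximum over pairs $(s,t)$. This is false. Kaplan's theorem (Theorem~2.5 here) gives invariance of $A(pqr)$ under $r\mapsto r'\equiv\pm r\pmod{pq}$, not modulo $p$; and the Lam--Leung parametrization of the support of $\Phi_{pq}$ involves $p_{q}^{*}$, the inverse of $p$ modulo $q$, which depends on $q$ itself and not merely on $q\bmod p$. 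Distinct primes $q$ in the same residue class modulo $p$ yield genuinely different polynomials $\Phi_{pq}$ and different height profiles $A(pqr)$. The bounds of Bzd\c{e}ga (Theorem~3.2) do depend only on residues modulo $p$, but those are \emph{upper bounds} for $A(pqr)$, not its value, and the gap between his $\frac{3}{4}p$ and the conjectured $\frac{2}{3}p$ is exactly the slack such residue-class arguments leave behind. So the ``finite combinatorial optimization'' you describe remains infinite in $q$, and the approach does not reduce the conjecture to a tractable problem.
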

This is the strongest corrected version of Beiter's conjecture
because they also proved that for any $\varepsilon >0$,
$\frac{2}{3}p(1-\varepsilon)\leqslant M(p)\leqslant \frac{3}{4}p$
for every sufficiently large prime $p$.

\section{Preliminaries}
Let $p<q<r$ be odd primes. We will first give a lemma for computing
the coefficients of $\Phi_{pqr}$ explicitly. By Proposition 1.2, we
can get
$$\Phi_{pqr}(x)=\frac{\Phi_{pq}(x^{r})}{\Phi_{pq}(x)}=\frac{\Phi_{pq}(x^{r})(x^{p}-1)(x^{q}-1)}{(x^{pq}-1)(x-1)}=\sum_{i}c_{i}x^{i}.\eqno(2.1)$$
Let
$$f(x)=\Phi_{pqr}(x)(x-1)=\sum_{i}(c_{i-1}-c_{i})x^{i}=\sum_{j}b_{j}x^{j}\eqno(2.2)$$
and
$$g(x)=f(x)(x^{pq}-1)=\sum_{j}(b_{j-pq}-b_{j})x^{j}=\sum_{k}a_{k}x^{k}.\eqno(2.3)$$
For $i<0$ or $i>\phi(pqr)=(p-1)(q-1)(r-1)$, $j<0$ or
$j>(p-1)(q-1)(r-1)+1$ and $k<0$ or $k>(p-1)(q-1)(r-1)+1+pq$, we set
$c_{i}=b_{j}=a_{k}=0$. Obviously, we have
$$c_{i}=\sum_{j\geqslant i+1}b_{j}=\sum_{j\geqslant i+1}\;\sum_{\substack{k\equiv j\!\!\!\!\pmod{pq}\\
k\geqslant j+pq}}a_{k}=\sum_{k\geqslant
i+1+pq}a_{k}+\sum_{k\geqslant i+1+2pq}a_{k}+\cdots.\eqno(2.4)$$
Let$$\Phi_{pq}(x)=\sum_{m}d_{m}x^{m},\eqno(2.5)$$ then
$$g(x)=\Phi_{pq}(x^{r})(x^{p}-1)(x^{q}-1)=\sum_{m}d_{m}x^{mr}(x^{p+q}-x^{q}-x^{p}+1).\eqno(2.6)$$
For $m<0$ or $m>\phi(pq)=(p-1)(q-1)$, we set $d_{m}=0$.

\textbf{Notation}\quad $\forall n\in \mathbb{Z}$, let $\overline{n}$
be the unique integer such that $0\leqslant \overline{n}\leqslant
pq-1$ and $\overline{n}\equiv n\pmod{pq}$.

For any $n\in \mathbb{Z}$, define a
map$$\chi_{n}:\mathbb{Z}\longrightarrow \{0,\pm 1\}$$ by
$$\chi_{n}(i)=\left \{\begin{array}{lll} 1 & & \textrm{if there exists an integer } s_{1} \textrm{ with } n+p+q\geqslant i+1+s_{1}pq> n+q\\
-1 & & \textrm{if there exists an integer } s_{2} \textrm{ with }
n+p\geqslant i+1+s_{2}pq> n\\
0 & & \textrm{otherwise}.\end{array}\right.$$ Note that this map is
well-defined. An elementary somewhat tedious argument then shows
that alternatively one can define $\chi_{n}$ by
$$\chi_{n}(i)=\left \{\begin{array}{lll} 1 & & \textrm{if }\overline{n+p+q}\geqslant
\overline{i+1}>\overline{n+q}\textrm{ or }\overline{i+1}\leqslant \overline{n+p+q}<\overline{n+q}\\
   & & \text{or }\overline{n+p+q}<\overline{n+q}<\overline{i+1},\\
-1 & &  \textrm{if }\overline{n+p}\geqslant
\overline{i+1}>\overline{n}\textrm{ or }\overline{i+1}\leqslant \overline{n+p}<\overline{n}\\
   & &  \textrm{or }\overline{n+p}<\overline{n}<\overline{i+1},\\
0  & &  otherwise.
\end{array}\right.$$
Now it is not difficult to verify the lemma below.
\begin{lemma}
With notation as above, we have $$c_{i}=\sum_{mr+p+q\geqslant
i+1+pq}d_{m}\chi_{mr}(i).\eqno(2.7)$$
\end{lemma}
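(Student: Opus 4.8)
The plan is to undo the three substitutions (2.1)--(2.3) in turn, tracking the contribution to $c_i$ of each monomial $d_m x^{mr}$ of $\Phi_{pq}(x^r)$ through the expansion (2.6) of $g(x)$; the number $\chi_{mr}(i)$ will emerge as the net weight of the finitely many coefficients $a_k$ that this monomial pushes past the thresholds $i+1+pq,\ i+1+2pq,\dots$ occurring in (2.4).

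First I would combine (2.4) and (2.6). Since $x^{mr}(x^{p+q}-x^q-x^p+1)=x^{mr+p+q}-x^{mr+q}-x^{mr+p}+x^{mr}$, the coefficient $a_k$ equals $\sum_m d_m\bigl([k=mr+p+q]-[k=mr+q]-[k=mr+p]+[k=mr]\bigr)$ in Iverson-bracket notation, hence for every integer $K$
$$\sum_{k\geq K}a_k=\sum_m d_m\bigl([mr+p+q\geq K]-[mr+q\geq K]-[mr+p\geq K]+[mr\geq K]\bigr),$$
all sums being finite because $d_m=0$ unless $0\leq m\leq(p-1)(q-1)$. Substituting $K=i+1+tpq$ for $t=1,2,\dots$ into (2.4) and interchanging the two finite summations gives $c_i=\sum_m d_m\,T_m$, where
$$T_m=\sum_{t\geq 1}\bigl([mr+p+q\geq i+1+tpq]-[mr+q\geq i+1+tpq]-[mr+p\geq i+1+tpq]+[mr\geq i+1+tpq]\bigr).$$

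Next I would evaluate $T_m$. If $mr+p+q<i+1+pq$ then $mr\leq mr+p\leq mr+q\leq mr+p+q<i+1+tpq$ for all $t\geq 1$, so every bracket vanishes and $T_m=0$; this is exactly why the sum in (2.7) is restricted to $mr+p+q\geq i+1+pq$. Assume that inequality and put $n=mr$, $u=n-i-1$, so that it reads $u\geq pq-p-q=(p-1)(q-1)-1\geq 1$, in particular $u>0$. Grouping the four brackets of $T_m$ in pairs yields
$$T_m=\#\{t\geq 1:\ u+q<tpq\leq u+p+q\}-\#\{t\geq 1:\ u<tpq\leq u+p\}.$$
Because $p<q<pq$ (the primes being odd), each of the intervals $(u+q,u+p+q]$ and $(u,u+p]$ has length $p<pq$ and so contains at most one multiple of $pq$; and since $u>0$, any such multiple has positive index, so the restriction $t\geq 1$ is automatic. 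Therefore
$$T_m=\bigl[\exists\,s_1\in\Z:\ u+q<s_1pq\leq u+p+q\bigr]-\bigl[\exists\,s_2\in\Z:\ u<s_2pq\leq u+p\bigr],$$
and restoring $u=n-i-1$ turns the first condition into ``$n+p+q\geq i+1+s_1pq>n+q$'' and the second into ``$n+p\geq i+1+s_2pq>n$'': precisely the two alternatives in the first definition of $\chi_n(i)$. These cannot both occur, for then $s_1>s_2$ (as $s_1pq>u+q>u+p\geq s_2pq$), whence $s_1pq\geq s_2pq+pq>u+pq>u+p+q$, contradicting $s_1pq\leq u+p+q$. Hence $T_m=\chi_n(i)=\chi_{mr}(i)$, and $c_i=\sum_{mr+p+q\geq i+1+pq}d_m\chi_{mr}(i)$, which is (2.7). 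The equivalent residue-theoretic description of $\chi_n$ is not needed for the lemma; it drops out by sorting the integers $i+1,\ n,\ n+p,\ n+q,\ n+p+q$ according to their residues modulo $pq$.

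This argument is entirely elementary, so I do not expect a genuine obstacle — only careful bookkeeping. The two points that must be handled correctly are the legitimacy of interchanging the summations (immediate from finiteness, since $d_m$ is eventually zero) and the inequality $p+q<pq$, which is simultaneously what forces each of the two intervals above to capture at most one multiple of $pq$ and what makes $mr+p+q\geq i+1+pq$ the exact summation range in (2.7). The substantive work of the paper lies beyond this point, in exploiting (2.7) to bound $|c_i|$.
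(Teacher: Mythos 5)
Your proposal is correct and follows essentially the same route as the paper: your quantity $T_m$ is exactly the paper's $W_i(mr+p+q)-W_i(mr+q)-W_i(mr+p)+W_i(mr)$, and both arguments identify this net count of thresholds $i+1+tpq$ with $\chi_{mr}(i)$ via the observation that each interval of length $p<pq$ contains at most one multiple of $pq$. Your write-up is somewhat more explicit about the interchange of summations, the automatic positivity of $t$, and the mutual exclusivity of the $+1$ and $-1$ cases, but these are bookkeeping details the paper leaves implicit rather than a different method.
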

\begin{proof}
Combining (2.3) and (2.6) yields
$$g(x)=\sum_{k}a_{k}x^{k}=\sum_{m}d_{m}x^{mr}(x^{p+q}-x^{q}-x^{p}+1).$$
By (2.4), we know that to compute $c_{i}$ it suffices to consider
only the coefficients $a_{k}$ of the terms of $g(x)$ with exponents
$k\geqslant i+1+pq$. On the other hand, for
$d_{m}x^{mr}(x^{p+q}-x^{q}-x^{p}+1)$, $mr+p+q\geqslant i+1+pq$, the
contribution to $c_{i}$ is
$$d_{m}(W_{i}(mr+p+q)-W_{i}(mr+q)-W_{i}(mr+p)+W_{i}(mr)),\eqno(2.8)$$
where $W_{i}(m_{1})$ counts the number of integers $s\geqslant 1$
such that $m_{1}\geqslant i+1+spq$. Now note that
$$W_{i}(mr+p+q)-W_{i}(mr+q)=\left\{
\begin{array}{lll}1 & & \textrm{if there exists an integer } s_{1} \textrm{ with}\\
 & & mr+p+q\geqslant i+1+s_{1}pq>mr+q;\\
0 & & otherwise,\end{array}\right.$$ and
$$-W_{i}(mr+p)+W_{i}(mr)=\left\{ \begin{array}{lll}-1 & & \textrm{if there exists an integer } s_{2} \textrm{ with}\\
  & & mr+p\geqslant i+1+s_{2}pq>mr;\\
0 & & otherwise.\end{array}\right.$$ By the definition of
$\chi_{n}$, it then follows that the expression in (2.8) equals
$d_{m}\chi_{mr}(i)$, so we complete the proof of the lemma.
\end{proof}

Especially, note that $c_{i}=0$ for $i<0$, so we can immediately
obtain the following consequence which will be very important in the
next section.
\begin{lemma}
For any integer $i$, $$\sum_{m}d_{m}\chi_{mr}(i)=0.\eqno(2.9)$$
\end{lemma}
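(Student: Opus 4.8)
The plan is to combine two facts: that $\chi_{n}$ is periodic in its argument with period $pq$, and that $c_{i}$ vanishes for negative $i$ while the previous lemma (equation (2.7)) reproduces the \emph{complete} sum $\sum_{m}d_{m}\chi_{mr}(i)$ once $i$ is pushed far enough to the left.

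First I would record the periodicity. In the first (existential) form of the definition of $\chi_{n}$, replacing $i$ by $i+pq$ merely shifts the auxiliary integer $s_{j}$ to $s_{j}+1$ and leaves the inequalities $n+p+q\geqslant i+1+s_{1}pq>n+q$ and $n+p\geqslant i+1+s_{2}pq>n$ unchanged; hence $\chi_{n}(i+pq)=\chi_{n}(i)$ for every $n$. Consequently the function $F(i):=\sum_{m}d_{m}\chi_{mr}(i)$ depends only on the residue of $i$ modulo $pq$.

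Next I would evaluate $F$ at a convenient representative. Given an integer $i$, choose $i'\equiv i\pmod{pq}$ with $i'\leqslant p+q-1-pq$; such an $i'$ exists, and since $(p-1)(q-1)>1$ it is automatically negative. For this $i'$, every $m$ with $d_{m}\neq 0$ satisfies $0\leqslant m\leqslant\phi(pq)$, so $mr+p+q\geqslant p+q\geqslant i'+1+pq$. Thus every index contributing to $F(i')$ already satisfies the constraint $mr+p+q\geqslant i'+1+pq$ appearing in (2.7), and therefore
$$c_{i'}=\sum_{mr+p+q\geqslant i'+1+pq}d_{m}\chi_{mr}(i')=\sum_{m}d_{m}\chi_{mr}(i')=F(i').$$
Since $i'<0$ we have $c_{i'}=0$, hence $F(i')=0$, and by periodicity $F(i)=F(i')=0$, which is exactly (2.9).

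I do not expect any serious obstacle: the argument rests entirely on the periodicity of $\chi_{n}$ and on choosing $i'$ negative enough that (2.7) captures the whole sum. The only mildly delicate point is ensuring that the chosen $i'$ forces $mr+p+q\geqslant i'+1+pq$ for \emph{all} $m$ in the support of $(d_{m})$, and the bound $i'+1+pq\leqslant p+q$ (equivalently $i'\leqslant p+q-1-pq$) is precisely what makes this work.
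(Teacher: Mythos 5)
Your proof is correct and follows essentially the same route as the paper: both exploit the periodicity $\chi_{n}(i+pq)=\chi_{n}(i)$ to replace $i$ by a representative $i'\equiv i\pmod{pq}$ that is negative enough (namely $i'\leqslant p+q-1-pq$) for the constraint $mr+p+q\geqslant i'+1+pq$ in (2.7) to hold for every $m$ in the support of $(d_m)$, so that the full sum equals $c_{i'}=0$. The only difference is notational (you pick $i'$ directly, the paper writes $i-spq$), and your parenthetical about the shift of the auxiliary integer $s_j$ should read $s_j-1$ rather than $s_j+1$, which does not affect the argument.
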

\begin{proof}
From either definition of $\chi_{n}$, it is easy to find that the
value of $\chi_{n}(i)$ only depends on $\overline{n}$ and
$\overline{i}$. That means that for any $n',i'\in
\mathbb{Z},n'\equiv n\pmod{pq},i'\equiv i\pmod{pq}$, we have
$$\chi_{n'}(i')=\chi_{n}(i).\eqno(2.10)$$  For any integer $i$, there
exists an integer $s$ such that $mr+p+q\geqslant (i-spq)+1+pq$ for
any non-negative integer $m$. Observe that $c_{i}=0$ for $i<0$ and
$d_{m}=0$ for $m<0$, hence we have
\begin{eqnarray*}
\sum_{m}d_{m}\chi_{mr}(i) & = &
\sum_{m}d_{m}\chi_{mr}(i-spq)\\
& = & \sum_{mr+p+q\geqslant
(i-spq)+1+pq}d_{m}\chi_{mr}(i-spq)\\
& = & c_{i-spq}\\
& = & 0.
\end{eqnarray*}
\end{proof}
\begin{lemma}
With the notation as above, we have
$$A(pqr)=\max_{i,j\in\mathbb{Z}}\left|\sum_{m\geqslant
j}d_{m}\chi_{mr}(i)\right|.\eqno(2.11)$$
\end{lemma}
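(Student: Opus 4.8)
The plan is to establish the two inequalities between $A(pqr)$ and $\max_{i,j}|\sum_{m\geqslant j}d_{m}\chi_{mr}(i)|$ separately. The inequality ``$\leqslant$'' is immediate from Lemma 2.1: for each $i$ that lemma writes $c_{i}$ itself in the required shape, $c_{i}=\sum_{m\geqslant j_{0}(i)}d_{m}\chi_{mr}(i)$, where $j_{0}(i)=\lceil(i+\phi(pq))/r\rceil$ is the least integer $m$ with $mr+p+q\geqslant i+1+pq$; since $A(pqr)=\max_{i}|c_{i}|$, this gives $A(pqr)\leqslant\max_{i,j}|\sum_{m\geqslant j}d_{m}\chi_{mr}(i)|$.

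For the reverse inequality I would fix integers $i,j$, put $\sigma(N)=\sum_{m\geqslant N}d_{m}\chi_{mr}(i)$, and aim at $|\sigma(j)|\leqslant A(pqr)$. By (2.10) the value $\chi_{mr}(i)$ depends on $i$ and on $m$ only modulo $pq$, so I may take $0\leqslant i\leqslant pq-1$; and since $d_{m}=0$ unless $0\leqslant m\leqslant\phi(pq)$, Lemma 2.2 gives $\sigma(N)=0$ for $N\leqslant 0$, while $\sigma(N)=0$ trivially for $N>\phi(pq)$, so I may assume $1\leqslant j\leqslant\phi(pq)$. Two facts then structure the argument. First, $\sigma(N)-\sigma(N+1)=d_{N}\chi_{Nr}(i)\in\{0,\pm1\}$ by Proposition 1.1, so $N\mapsto\sigma(N)$ is a walk with unit steps. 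Second, if some integer $i'\equiv i\pmod{pq}$ satisfies $j_{0}(i')=j$ — equivalently, if the interval $((j-1)r-\phi(pq),\,jr-\phi(pq)]$ meets the residue class of $i$ — then $\sigma(j)=\sum_{m\geqslant j_{0}(i')}d_{m}\chi_{mr}(i')=c_{i'}$, so $|\sigma(j)|\leqslant A(pqr)$. Since that interval has $r$ consecutive integers, this covers the case $r\geqslant pq$ outright, and leaves only $r<pq$, where $j$ may be skipped by the set $J=\{j_{0}(i'):i'\equiv i\pmod{pq}\}$.

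In that remaining case I would take the consecutive elements $j_{1}<j<j_{2}$ of $J$ bracketing $j$; since $r<pq$ these arise from consecutive representatives, so $\sigma(j_{1})=c_{\ell}$ and $\sigma(j_{2})=c_{\ell+pq}$ for a suitable $\ell\equiv i\pmod{pq}$, and moreover $j_{2}-j_{1}\leqslant\lceil pq/r\rceil$. The point I want is that $\sigma$ is monotone on $[j_{1},j_{2}]$; granting it, $\sigma(j)$ lies between $c_{\ell}$ and $c_{\ell+pq}$, hence $|\sigma(j)|\leqslant\max(|c_{\ell}|,|c_{\ell+pq}|)\leqslant A(pqr)$, which finishes the proof. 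Monotonicity on $[j_{1},j_{2}]$ says exactly that, among $j_{1}\leqslant m<j_{2}$, all nonzero steps $d_{m}\chi_{mr}(i)$ have a common sign. Rewriting $\chi$ via $\chi_{mr}(i)=\psi(\overline{i-mr})$ — where $\psi$ equals $-1$ on $\{0,\dots,p-1\}$, $+1$ on $\{q,\dots,q+p-1\}$, and $0$ elsewhere, a reformulation of the ``alternative'' definition that is legitimate because $\chi_{n}(i)$ depends only on $\overline{i-n}$ (from (2.10) together with $\chi_{n+t}(i+t)=\chi_{n}(i)$, visible from the first definition) — this turns into the assertion that, for nonzero-contributing $m<m'$ in $[j_{1},j_{2})$, the coefficients $d_{m}$ and $d_{m'}$ agree in sign precisely when $\overline{i-mr}$ and $\overline{i-m'r}$ fall in the same of the two blocks on which $\psi$ is constant. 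Proving this sign matching is the heart of the matter and the step I expect to be the main obstacle: it must be read off from the classical explicit description of which coefficients of $\Phi_{pq}$ are nonzero (their positions, not merely the sign alternation recorded in Proposition 1.1), using that the two indices lie within $\lceil pq/r\rceil$ of each other. The rest — the reductions to $0\leqslant i\leqslant pq-1$ and $1\leqslant j\leqslant\phi(pq)$, the case $r\geqslant pq$, and the final sandwich — is routine.
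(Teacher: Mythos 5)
Your skeleton is the right one and matches the paper's in outline: both directions reduce to showing that for arbitrary $j$ the tail $\sigma(j)=\sum_{m\geqslant j}d_{m}\chi_{mr}(i)$ is sandwiched between two genuine coefficients $c_{\ell}$ and $c_{\ell+pq}$. But the one step you explicitly defer --- monotonicity of $\sigma$ between consecutive anchor indices $j_{1}<j_{2}$ --- is precisely the entire content of the lemma, and leaving it as ``the main obstacle'' means the proof is not done. Worse, the route you sketch for it (reading off a sign-matching statement for all nonzero-contributing $m$ in a window of length $\lceil pq/r\rceil$ from the Lam--Leung positional description of the coefficients of $\Phi_{pq}$) points at heavier machinery than is needed and gives no indication of how the argument would actually close.

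The paper closes this gap by localizing much more sharply. With $s$ the largest integer such that $jr+p+q\geqslant(i+spq)+1+pq$, the conditions defining $\chi$ (intervals of length $p$, stepped through with stride $r>q>p$) force that strictly between the two anchors there are at most \emph{two} indices $m$ with $\chi_{mr}(i)\neq 0$, and when there are two they are the consecutive indices $j-1$ and $j$ with $\chi_{(j-1)r}(i)=+1$ and $\chi_{jr}(i)=-1$. At that point Proposition 1.1 alone (the nonzero coefficients of $\Phi_{pq}$ alternate in sign, so $d_{j-1}d_{j}\leqslant 0$) gives $d_{j-1}\chi_{(j-1)r}(i)\cdot d_{j}\chi_{jr}(i)\geqslant 0$, i.e.\ the two possible unit steps have a common sign, which is exactly your monotonicity claim; no information about the \emph{positions} of the nonzero $d_{m}$ is required. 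To repair your write-up you would need to prove this two-step localization (a short computation from $r>q>p$ and the disjointness of the intervals $(mr,mr+p]$ for fixed $s_{2}$), after which your sandwich argument goes through verbatim.
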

\begin{proof}
By (1.2) and (2.7), obviously we have $$A(pqr)\leqslant
\max_{i,j\in\mathbb{Z}}\left|\sum_{m\geqslant
j}d_{m}\chi_{mr}(i)\right|.\eqno(2.12)$$ Now it suffices to show
that for any $i,j\in\mathbb{Z}$,
$$\left|\sum_{m\geqslant j}d_{m}\chi_{mr}(i)\right|\leqslant
A(pqr).$$ Let $s$ be the largest integer such that $jr+p+q\geqslant
(i+spq)+1+pq$. If $(j-1)r+p+q<(i+spq)+1+pq$,
then$$\left|\sum_{m\geqslant
j}d_{m}\chi_{mr}(i)\right|=\left|\sum_{mr+p+q\geqslant
(i+spq)+1+pq}d_{m}\chi_{mr}(i+spq)\right|=|c_{i+spq}|\leqslant
A(pqr).$$ If $(j-1)r+p+q\geqslant (i+spq)+1+pq$ and
$\chi_{jr}(i)=0$, then $jr+p>(j-1)r+p+q\geqslant (i+spq)+1+pq$
(because $r>q$) and hence $(i+spq)+1+pq\leqslant jr<jr+q<(j+1)r$.
Let $j_{1}$ be the smallest integer such that $j_{1}r+p+q\geqslant
(i+(s+1)pq)+1+pq$, then for $j\leqslant m< j_{1}$, $\chi_{mr}(i)=0$.
Therefore we have
$$\left|\sum_{m\geqslant
j}d_{m}\chi_{mr}(i)\right|=\left|\sum_{m\geqslant
j_{1}}d_{m}\chi_{mr}(i)\right|=\left|c_{i+(s+1)pq}\right|\leqslant
A(pqr).$$ If $(j-1)r+p+q\geqslant (i+spq)+1+pq$ and
$\chi_{jr}(i)\neq 0$, then $\chi_{jr}(i)=-1$, that is,
$jr+p\geqslant (i+spq)+1+pq>jr$. Since $p<q<r$, we get
$(j-2)r+p+q<jr<(i+spq)+1+pq$ and $(j+1)r>jr+p\geqslant (i+spq)+1+pq$
which implies that for $j+1\leqslant m<j_{1}$, $\chi_{mr}(i)=0$. It
follows that
$$\left|\sum_{m\geqslant
j+1}d_{m}\chi_{mr}(i)\right|=\left|\sum_{m\geqslant
j_{1}}d_{m}\chi_{mr}(i)\right|=\left|c_{i+(s+1)pq}\right|\leqslant
A(pqr),$$ and $$\left|\sum_{m\geqslant
j-1}d_{m}\chi_{mr}(i)\right|=\left|c_{i+spq}\right|\leqslant
A(pqr).$$ If $\chi_{(j-1)r}(i)=0$, then $$\left|\sum_{m\geqslant
j}d_{m}\chi_{mr}(i)\right|=\left|\sum_{m\geqslant
j-1}d_{m}\chi_{mr}(i)\right|=\left|c_{i+spq}\right|\leqslant
A(pqr).$$ If $\chi_{(j-1)r}(i)\neq 0$, then $\chi_{(j-1)r}(i)=1$. By
Proposition 1.2, we have $d_{j-1}d_{j}\leqslant 0$, hence
$d_{j-1}\chi_{(j-1)r}(i)d_{j}\chi_{jr}(i)\geqslant 0$, therefore
$$\left|\sum_{m\geqslant j}d_{m}\chi_{mr}(i)\right|\leqslant
\max\left\{\left|\sum_{m\geqslant j-1}d_{m}\chi_{mr}(i)\right|,
\left|\sum_{m\geqslant j+1}d_{m}\chi_{mr}(i)\right|\right\}\leqslant
A(pqr).$$ This completes the proof of the corollary.
\end{proof}

\begin{remark}
If $q$ and $r$ interchange, we will have similar arguments as above.
Lemma 2.1 and Lemma 2.2 still hold, but Lemma 2.3 should be
modified. We can only get the trivial conclusion (2.12), but it is
sufficient for estimating the upper bound of $A(pqr)$ to consider
$\max_{i,j\in\mathbb{Z}}\left|\sum_{m\geqslant
j}d_{m}\chi_{mr}(i)\right|.$
\end{remark}

Based on the results above, we can establish explicitly the
following Theorem 2.5 and Theorem 2.6 which have been proven by
Kaplan [9].
\begin{theorem}[Nathan Kaplan, 2007]
Let $p<q<r$ be  odd primes. Then for any prime $s>q$ such that
$s\equiv \pm r\pmod{pq}$, $A(pqr)=A(pqs)$.
\end{theorem}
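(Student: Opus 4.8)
The plan is to derive the identity purely from Lemma 2.3 and Lemma 2.2, exploiting that the coefficients $d_m$ of $\Phi_{pq}$ do not involve the largest prime at all and that, by (2.10), $\chi_n(i)$ depends on $n$ only through $\overline{n}$. Since $s>q$ is a prime, $p<q<s$ are odd primes, so Lemma 2.3 applies verbatim with $r$ replaced by $s$ and gives $A(pqs)=\max_{i,j\in\mathbb{Z}}\big|\sum_{m\geqslant j}d_m\chi_{ms}(i)\big|$, while $A(pqr)=\max_{i,j\in\mathbb{Z}}\big|\sum_{m\geqslant j}d_m\chi_{mr}(i)\big|$.

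First I would treat the case $s\equiv r\pmod{pq}$. Then $ms\equiv mr\pmod{pq}$ for every $m$, hence $\chi_{ms}(i)=\chi_{mr}(i)$ by (2.10); the two maxima are term-by-term identical and $A(pqs)=A(pqr)$ follows at once.

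The case $s\equiv -r\pmod{pq}$ is the one requiring work. Here $ms\equiv -mr\pmod{pq}$, so $\chi_{ms}(i)=\chi_{-mr}(i)$. From the first definition of $\chi$ one reads off directly the shift rule $\chi_{n-t}(i)=\chi_n(i+t)$ for all $t\in\mathbb{Z}$ (as well as the periodicity $\chi_{n+pq}(i)=\chi_n(i)$). Writing $M=\phi(pq)=(p-1)(q-1)$ and using that $\Phi_{pq}$ is self-reciprocal, so $d_m=d_{M-m}$, I would substitute $m\mapsto M-m$ in the tail sum:
$$\sum_{m\geqslant j}d_m\chi_{-mr}(i)=\sum_{m\leqslant M-j}d_{M-m}\chi_{-(M-m)r}(i)=\sum_{m\leqslant M-j}d_m\chi_{mr}(i+Mr),$$
the last step by the shift rule with $t=Mr$. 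By Lemma 2.2 the full sum $\sum_m d_m\chi_{mr}(i+Mr)$ vanishes, so the tail over $m\leqslant M-j$ equals minus the tail over $m\geqslant M-j+1$; hence
$$\Big|\sum_{m\geqslant j}d_m\chi_{-mr}(i)\Big|=\Big|\sum_{m\geqslant M-j+1}d_m\chi_{mr}(i+Mr)\Big|.$$
Since $(i,j)\mapsto(i+Mr,\,M-j+1)$ is a bijection of $\mathbb{Z}^2$, taking the maximum over all $i,j$ on both sides gives $A(pqs)=A(pqr)$.

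The only delicate point is the index bookkeeping in this reflection step: one must check that the substitution $m\mapsto M-m$ together with the shift rule returns the summand exactly to the form $d_m\chi_{mr}(\,\cdot\,)$, and that the vanishing of $d_m$ outside $[0,M]$ makes the range changes harmless while Lemma 2.2 absorbs the complementary tail cleanly. Verifying the shift and periodicity properties of $\chi_n$ from its definition is the ``somewhat tedious'' part alluded to earlier, but it is entirely elementary; once these and the self-reciprocity of $\Phi_{pq}$ are in hand, the rest is formal.
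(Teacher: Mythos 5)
Your proof is correct, and for the essential case $s\equiv -r\pmod{pq}$ it takes a genuinely different route from the paper. The paper disposes of that case with a single two-variable reflection identity for $\chi$, namely (2.13): $\chi_{mr}(i)=-\chi_{-mr}(-i+p+q-1)$. This makes the tail sum for $s$ at the reflected index $-i+p+q-1$ equal, term by term over the \emph{same} index set $m\geqslant j$, to minus the tail sum for $r$ at $i$; taking absolute values and maximizing over $i$ (which absorbs the affine change of index) finishes the argument with no further input. You instead use only the one-variable shift/periodicity property $\chi_{n-t}(i)=\chi_{n}(i+t)$, and compensate by bringing in two extra ingredients: the self-reciprocity $d_m=d_{\phi(pq)-m}$ of $\Phi_{pq}$ (true and standard, though nowhere stated in the paper --- it also drops out of Theorem 3.1, so you should cite or prove it) to turn $\chi_{-mr}$ back into $\chi_{mr}$ at the cost of reversing the direction of the tail, and then Lemma 2.2 to flip the reversed tail $\sum_{m\leqslant M-j}$ into a genuine tail $\sum_{m\geqslant M-j+1}$ of the same absolute value. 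Your index bookkeeping checks out (the convention $d_m=0$ outside $[0,\phi(pq)]$ makes the range substitution harmless, and $(i,j)\mapsto(i+Mr,\,M-j+1)$ is a bijection of $\mathbb{Z}^2$), and your use of Lemma 2.3 for the triple $(p,q,s)$ is legitimate since $s>q$. The trade-off: the paper's identity (2.13) is the ``tedious but elementary'' verification concentrated in one place, while your route replaces it with an easier symmetry of $\chi$ at the price of two auxiliary facts; both are sound.
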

\begin{proof}
If $s\equiv r\pmod{pq}$, then $\chi_{mr}(i)=\chi_{ms}(i)$, by
(2.11), we obtain
$$A(pqr)=\max_{i,j\in\mathbb{Z}}\left|\sum_{m\geqslant
j}d_{m}\chi_{mr}(i)\right|=\max_{i,j\in\mathbb{Z}}\left|\sum_{m\geqslant
j}d_{m}\chi_{ms}(i)\right|=A(pqs).$$ Next we consider the case
$s\equiv -r\pmod{pq}$. From the definition of $\chi_{n}$, we can
simply verify that
$$\chi_{mr}(i)=-\chi_{-mr}(-i+p+q-1).\eqno(2.13)$$
Therefore, by (2.10) and (2.13), we have
\begin{eqnarray*}
A(pqr) & = & \max_{i,j\in\mathbb{Z}}\left|\sum_{m\geqslant
j}d_{m}\chi_{mr}(i)\right|\\
  & = & \max_{i,j\in\mathbb{Z}}\left|\sum_{m\geqslant
j}d_{m}\chi_{-mr}(-i+p+q-1)\right|\\
  & = & \max_{i,j\in\mathbb{Z}}\left|\sum_{m\geqslant
j}d_{m}\chi_{ms}(-i+p+q-1)\right|\\
  & = & A(pqs).
\end{eqnarray*}
\end{proof}

\begin{theorem}[Nathan Kaplan, 2007]
Let $p<q$ and $r\equiv \pm 1\pmod{pq}$ be odd primes. Then
$A(pqr)=1.$
\end{theorem}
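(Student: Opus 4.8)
The plan is to establish the non-trivial inequality $A(pqr)\le 1$; since $\Phi_{pqr}(x)$ is monic of positive degree we have $A(pqr)\ge 1$ as well, so this gives $A(pqr)=1$. By (2.12) it suffices to prove that $|\sum_{m\ge j}d_m\chi_{mr}(i)|\le 1$ for all $i,j\in\mathbb{Z}$. (The ordering is automatic here: if $r$ is an odd prime with $r\equiv\pm1\pmod{pq}$, then $r\ge pq-1>q>p$, so $p<q<r$ and the framework of Section~2 applies.)

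Fix $i$ and examine the function $m\mapsto\chi_{mr}(i)$. By (2.10) its value depends only on $\overline{mr}$, and since $r\equiv\pm1\pmod{pq}$ one has $\overline{mr}=\overline m$ or $\overline{mr}=\overline{-m}$; so this function coincides, up to a relabelling of $\mathbb{Z}/pq\mathbb{Z}$, with $m\mapsto\chi_m(i)$. Reading off the first of the two definitions of $\chi_n$, one checks that, as a function of $\overline m$, it equals $-1$ exactly on a block $W_-$ of $p$ consecutive residues, equals $+1$ exactly on the block $W_+=W_--q$ (again $p$ consecutive residues), and equals $0$ otherwise; moreover $W_+$ and $W_-$ are disjoint, since $p<q$ and $p+q<pq$. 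On the other hand $d_m=0$ unless $0\le m\le(p-1)(q-1)$, and the residues in the range $((p-1)(q-1),\,pq-1]$ form a run of $p+q-2$ consecutive values, a run longer than $p$; hence no block of $p$ consecutive residues can contain it, so each of $W_+$, $W_-$ meets $\{0,1,\dots,(p-1)(q-1)\}$ in a single interval of integers. Discarding the indices with $d_m=0$, we reach the following picture: $m\mapsto\chi_{mr}(i)$ equals $+1$ on an interval $I_+$, equals $-1$ on a disjoint interval $I_-$, and equals $0$ on the rest of $\{0,\dots,(p-1)(q-1)\}$.

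Now set $G(j)=\sum_{m\ge j}d_m\chi_{mr}(i)$. For $j\le0$ we have $G(j)=\sum_m d_m\chi_{mr}(i)$, which vanishes by Lemma~2.2; for $j>(p-1)(q-1)$ we have $G(j)=0$ trivially; and replacing $j$ by $j+1$ changes $G$ by $-d_j\chi_{jr}(i)$, an increment equal to $-d_j$ when $j\in I_+$ and to $+d_j$ when $j\in I_-$. By Proposition~1.1 the nonzero $d_j$ with $j$ restricted to any interval form an alternating sequence of $\pm1$'s, so the nonzero increments of $G$ coming from $I_+$ alternate in sign, and so do those coming from $I_-$. Hence, as a function of $j$, $G$ is an integer walk that starts and ends at $0$ and in between traverses at most two ``alternating blocks'' of unit steps, remaining constant outside them. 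Any such walk stays in $\{-1,0,1\}$: if the first alternating block has an even number of steps it brings the walk back to $0$ and the argument is repeated on the second block; if the first block has an odd number of steps the walk ends it displaced by $\pm1$, whereupon the requirement that the walk return to $0$ at the end forces the second block to have an odd number of steps with its first step of the opposite sign, so that the walk merely oscillates between $0$ and $\pm1$. Therefore $|G(j)|\le1$ for all $i,j$, and the theorem follows.

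The one step that genuinely demands care is the second: one must determine precisely which residue classes carry which value of $\chi_{mr}(i)$, track the possible wrap-around modulo $pq$, and verify both that $W_+$ and $W_-$ are disjoint and that each of them meets the support of $(d_m)$ in a single interval. Once that is in place the walk argument of the third step is automatic; it is worth noting that this argument uses nothing about $\Phi_{pq}(x)$ beyond Proposition~1.1 (the alternation of its coefficients)---in particular no explicit, Lam--Leung-type formula for the $d_m$ is required---and that the identity $\sum_m d_m\chi_{mr}(i)=0$ of Lemma~2.2 alone supplies the single relation between $W_+$ and $W_-$ on which the estimate rests.
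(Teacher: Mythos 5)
Your proof is correct and rests on exactly the same ingredients as the paper's: the observation that for $r\equiv\pm1\pmod{pq}$ the sets where $\chi_{mr}(i)=+1$ and $-1$ are disjoint intervals of consecutive integers, the alternation of the nonzero $d_m$ (Proposition~1.1), and the vanishing of the full sum (Lemma~2.2). The only difference is cosmetic: where the paper does a case analysis on the position of $j$ relative to $M^{+}$ and $M^{-}$ using the partial-sum bounds (2.14)--(2.15), you repackage the same facts as a statement that $j\mapsto\sum_{m\geqslant j}d_m\chi_{mr}(i)$ is a walk from $0$ to $0$ with at most two alternating blocks of unit steps, hence confined to $\{-1,0,1\}$.
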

\begin{proof}
Since $d_{m}=0$ for $m<0$ or $m>\phi(pq)$, by (2.9), we can get for
any pair of integers $i$ and $j\leqslant 0$ or $j>\phi(pq)$,
$$\sum_{m\geqslant j}d_{m}\chi_{mr}(i)=0.$$ Given $i$ and $0<j\leqslant
\phi(pq)$, let $$M^{+}=\{0\leqslant m\leqslant
\phi(pq)|\chi_{mr}(i)=1\},\quad M^{-}=\{0\leqslant m\leqslant
\phi(pq)|\chi_{mr}(i)=-1\}.$$ Since $r\equiv \pm 1\pmod{pq}$, the
definition of $\chi_{n}$ implies that both of $M^{+}$ and $M^{-}$
are sets of consecutive integers, of cardinality at most $p$. Let us
first assume that $j\in M^{-}$, then $j\notin M^{+}$. It follows
that $\chi_{mr}(i)\neq 1$ either for all $\phi(pq)\geqslant
m\geqslant j$ or for all $0\leqslant m<j$, hence $\sum_{m\geqslant
j, m\in M^{+}}d_{m}\chi_{mr}(i)$ or $\sum_{m<j, m\in
M^{+}}d_{m}\chi_{mr}(i)$ should be $0$. On the other hand, by
Proposition 1.2, we have
$$\left|\sum_{m\geqslant j, m\in
M^{+}}d_{m}\chi_{mr}(i)\right|\leqslant 1,\qquad\left|\sum_{m<j,
m\in M^{+}}d_{m}\chi_{mr}(i)\right|\leqslant 1,\eqno(2.14)$$
$$\left|\sum_{m\geqslant j, m\in
M^{-}}d_{m}\chi_{mr}(i)\right|\leqslant 1,\qquad\left|\sum_{m<j,
m\in M^{-}}d_{m}\chi_{mr}(i)\right|\leqslant 1.\eqno(2.15)$$
Combining the above observations and (2.9), we have
\begin{eqnarray*}\left|\sum_{m\geqslant
j}d_{m}\chi_{mr}(i)\right| & = & \left|\sum_{m\geqslant j, m\in
M^{+}}d_{m}\chi_{mr}(i)+\sum_{m\geqslant j, m\in
M^{-}}d_{m}\chi_{mr}(i)\right|\\
 & = & \left|\sum_{m<j, m\in
M^{+}}d_{m}\chi_{mr}(i)+\sum_{m<j, m\in
M^{-}}d_{m}\chi_{mr}(i)\right|\\
 & \leqslant & 1.\end{eqnarray*} For the case $j\notin
M^{-}$, the argument is similar. Therefore by (2.12), we have
$A(pqr)\leqslant 1$, thus $A(pqr)=1$. This completes the proof.
\end{proof}

\begin{theorem}
Let $p<q<r$ be odd primes. Then $A(pqr)\leqslant \min\{\overline{r},
pq-\overline{r}\}.$
\end{theorem}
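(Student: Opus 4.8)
The plan is to route everything through Lemma 2.3 (or just the weaker inequality (2.12), which needs no assumption on the order of $q,r$) and reduce the two bounds to a single combinatorial estimate about the coefficients of $\Phi_{pq}$.

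\emph{Reduction to a uniform statement.} Since $\chi_n(i)$ depends only on $\overline n$ by (2.10), and $mr\equiv m\overline r\pmod{pq}$, we have $\chi_{mr}(i)=\chi_{m\overline r}(i)$. Moreover, by the reflection identity (2.13), for every $\delta$ coprime to $pq$ one has $\sum_{m\ge j}d_m\chi_{m\delta}(i)=-\sum_{m\ge j}d_m\chi_{m(pq-\delta)}(p+q-1-i)$, so that $\max_{i,j}\bigl|\sum_{m\ge j}d_m\chi_{m\delta}(i)\bigr|$ is unchanged when $\delta$ is replaced by $pq-\delta$. Combined with (2.12) it therefore suffices to prove
\[
(\star)\qquad \Bigl|\sum_{m\ge j}d_m\chi_{m\delta}(i)\Bigr|\le\delta \quad\text{for all }i,j\in\mathbb Z\text{ and all }\delta\text{ with }1\le\delta\le pq-1,\ \gcd(\delta,pq)=1,
\]
and then to apply $(\star)$ once with $\delta=\overline r$ and once with $\delta=pq-\overline r$. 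Note that with $\delta=1$ this is essentially the assertion proved inside Theorem 2.6, and that $(\star)$ is immediate once $\delta\ge 2p$, since the number of $m\in[0,\phi(pq)]$ with $\chi_{m\delta}(i)\neq0$ is at most $2p$.

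\emph{Setting up $(\star)$.} Fix $i$ and $\delta$ and consider $F(j):=\sum_{m\ge j}d_m\chi_{m\delta}(i)$; by Lemma 2.2 together with $d_m=0$ for $m\notin[0,\phi(pq)]$ we may assume $1\le j\le\phi(pq)$, and $F(j)-F(j+1)=d_j\chi_{j\delta}(i)$ with $F$ vanishing at both ends. From the second description of $\chi$, $\chi_{m\delta}(i)=\mathbf 1[\overline{m\delta}\in\mathcal P]-\mathbf 1[\overline{m\delta}\in\mathcal N]$, where $\mathcal P,\mathcal N$ are windows of $p$ consecutive residues mod $pq$ with $\mathcal N\equiv\mathcal P+q$, and reading $\mathbb Z/pq$ cyclically the regions appear in the order $\mathcal P$, gap of length $q-p$, $\mathcal N$, gap of length $pq-p-q$. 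As $m$ increases $\overline{m\delta}$ advances by $\delta$, so $\{m:\overline{m\delta}\in\mathcal P\}$ and $\{m:\overline{m\delta}\in\mathcal N\}$ each split into ``runs'' of consecutive integers, and a short count (treating $\delta<p$, $p<\delta<pq-p$, $\delta>pq-p$ separately) shows the number of $\mathcal P$-runs, and likewise of $\mathcal N$-runs, contained in any interval is at most $\min(\delta,p,pq-\delta)$, up to one straddling run that can be removed by passing between $\sum_{m\ge j}$ and $\sum_{m<j}$ via Lemma 2.2, exactly as in the proof of Theorem 2.6. By Proposition 1.1 the partial sums of the $d_m$ over any interval lie in $\{-1,0,1\}$, so each run contributes a quantity in $\{-1,0,1\}$ to $F$.

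\emph{The crux.} Bounding the $\mathcal P$- and $\mathcal N$-runs separately only gives $2\delta$; the saving must come from the relation $\mathcal N=\mathcal P+q$, which (for small $\delta$, in the same cyclic order in $m$, modulo care at wrap-arounds) makes every $\mathcal P$-run be immediately followed by exactly one $\mathcal N$-run. The key step — and the one I expect to be the real obstacle — is to show that such a pair of runs contributes in total only something in $\{-1,0,1\}$ to $F$; equivalently, that the configuration ``$\sum_{m\in\mathcal P\text{-run}}d_m=+1$ and $\sum_{m\in\mathcal N\text{-run}}d_m=-1$'', or its mirror, never occurs for two consecutive such runs. I would extract this from the fine structure of $\Phi_{pq}$, namely from $\Phi_{pq}(x)(x^p-1)=(x-1)\sum_{k=0}^{p-1}x^{kq}$, which on partial sums reads $s(M):=\sum_{m\le M}d_m=s(M-p)+\mathbf 1[q\mid M,\ 0\le M\le(p-1)q]$, used to relate the values of $s$ at the four endpoints of the two runs. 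Granting this pairing estimate, $F$ changes by at most $1$ each time a full $\mathcal P/\mathcal N$ pair is crossed, at most $\delta$ such pairs occur in the range, and so $|F(j)|\le\delta$, which is $(\star)$; applying $(\star)$ with $\delta=\overline r$ and $\delta=pq-\overline r$ gives $A(pqr)\le\min\{\overline r,\,pq-\overline r\}$. Everything besides the pairing estimate is bookkeeping of the same flavour as Lemma 2.3 and Theorem 2.6; the pairing estimate is where the genuine work lies.
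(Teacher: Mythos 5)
Your reduction to the single statement $(\star)$ is sound (the reflection step replacing $\delta$ by $pq-\delta$ is essentially the computation in Theorem 2.5, and the run structure you describe is correct), but the proof has a genuine gap exactly where you flag it: the ``pairing estimate'', that each consecutive $\mathcal P$-run/$\mathcal N$-run pair contributes a total in $\{-1,0,1\}$, is asserted and not proved, and it is the entire content of the theorem in the interesting regime $\delta<2p$. Nothing in your sketch of how to extract it from $\Phi_{pq}(x)(x^p-1)=(x-1)\sum_{k=0}^{p-1}x^{kq}$ actually rules out the configuration where a $\mathcal P$-run sums to $+1$ and the adjacent $\mathcal N$-run sums to $-1$; and even granting the estimate, the truncation of a pair at $j$ and the wrap-around pair would each cost an extra unit, so the bookkeeping as described would only yield something like $\delta+2$, not $\delta$.

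The paper avoids the pairing question entirely by a simpler device that you came close to but did not use. It partitions $[0,\phi(pq)]$ so that the positive runs number at most $\overline r$ (via the observation that consecutive run-starts $m_1<m_2$ satisfy $(m_2-m_1)\overline r\geqslant p(q-1)$, so $\overline r+1$ runs would overshoot $\phi(pq)$), and likewise for the negative runs. Each run contributes at most $1$ by Proposition 1.1. Counting the runs that meet $\{m\geqslant j\}$ and those that meet $\{m<j\}$ — with only the single run containing $j$ counted on both sides — gives
$$\Bigl|\sum_{m\geqslant j}d_m\chi_{mr}(i)\Bigr|+\Bigl|\sum_{m<j}d_m\chi_{mr}(i)\Bigr|\leqslant 2\overline r+1,$$
and then Lemma 2.2 forces the two summands to be equal, so each is at most $\overline r$. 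This ``split and average'' step is the missing idea: it converts the naive bound $2\overline r$ into $\overline r$ without any cancellation between adjacent runs. (For the second bound the paper invokes Dirichlet and Theorem 2.5 to pass to a prime $s$ with $\overline s=pq-\overline r$; your reflection identity accomplishes the same thing.) If you replace the pairing estimate by this argument, the rest of your writeup goes through.
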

\begin{proof}
Given $i$ and $0<j\leqslant \phi(pq)$, according to the proof of
Theorem 2.6, there must exist a partition of $[0, \phi(pq)]$,
$0=t_{0}<t_{1}\leqslant t_{2}\leqslant \cdots\leqslant
t_{\overline{r}-1}\leqslant t_{\overline{r}}=\phi(pq), t_{k}\in
\mathbb{Z}$ for $0\leqslant k\leqslant \overline{r}$, such that
$$M_{k}^{+}=\{t_{k-1}<m\leqslant t_{k}|\chi_{mr}(i)=1\}, 2\leqslant
k\leqslant \overline{r},$$
$$M_{1}^{+}=\{t_{0}\leqslant m\leqslant
t_{1}|\chi_{mr}(i)=1\}$$ are all sets of consecutive integers, of
cardinality at most $p$. In fact, we can obtain this partition by
induction. First, let $m_{1}$ be the smallest integer such that
$0\leqslant m_{1}\leqslant \phi(pq)$ and $\chi_{m_{1}r}(i)=1$. Then
we can take $t_{1}+1$ to equal the smallest integer such that
$m_{1}\leqslant t_{1}\leqslant \phi(pq)$ and
$\chi_{(t_{1}+1)r}(i)\neq 1$. Next let $m_{2}$ be the smallest
integer such that $t_{1}<m_{2}\leqslant \phi(pq)$ and
$\chi_{m_{2}r}(i)=1$. Then we can take $t_{2}+1$ to equal the
smallest integer such that $m_{2}\leqslant t_{2}\leqslant \phi(pq)$
and $\chi_{(t_{2}+1)r}(i)\neq 1$. Moreover, by the definition of
$\chi_{n}$, we have
$$(m_{2}-m_{1})\overline{r}\geqslant p(q-1).\eqno(2.16)$$
Inductively, we can get $m_{3}, t_{3}, \cdots, m_{\overline{r}},
t_{\overline{r}}$. Notice that if $m_{k}$ does not exist or
$m_{k}=\phi(pq)$, then we can take
$t_{k}=t_{k+1}=\cdots=t_{\overline{r}}=\phi(pq)$. Specially, if
$t_{\overline{r}}<\phi(pq)$, we claim $m_{\overline{r}+1}$ does not
exist. Otherwise, by (2.16) we have
$$(m_{\overline{r}+1}-m_{1})\overline{r}=(m_{\overline{r}+1}-m_{\overline{r}}+\cdots+m_{2}-m_{1})\overline{r}\geqslant p(q-1)\overline{r}.\eqno(2.17)$$
On the other hand,
$$(m_{\overline{r}+1}-m_{1})\overline{r}\leqslant \phi(pq)\overline{r}=(p-1)(q-1)\overline{r}.$$
This contradicts (2.17), so we can always take
$t_{\overline{r}}=\phi(pq)$. Similarly, there also exists a
partition of $[0, \phi(pq)]$, $0=s_{0}<s_{1}\leqslant s_{2}\leqslant
\cdots\leqslant s_{\overline{r}-1}\leqslant
s_{\overline{r}}=\phi(pq), s_{l}\in \mathbb{Z}$ for $0\leqslant
l\leqslant \overline{r}$, such that
$$M_{l}^{-}=\{s_{l-1}<m\leqslant s_{l}|\chi_{mr}(i)=-1\}, 2\leqslant
l\leqslant \overline{r},$$
$$M_{1}^{-}=\{s_{0}\leqslant m\leqslant
s_{1}|\chi_{mr}(i)=-1\}$$ are all sets of consecutive integers, of
cardinality at most $p$.

Assume $t_{k-1}<j\leqslant t_{k}$ for some $1\leqslant k\leqslant
\overline{r}$ and $s_{l-1}<j\leqslant s_{l}$ for some $1\leqslant
l\leqslant \overline{r}$. Let us first assume $j\in M_{l}^{-}$, then
$j\notin M_{k}^{+}$. By (2.14) and (2.15), we have
\begin{eqnarray*}
\sum_{m\geqslant j}d_{m}\chi_{mr}(i) & = & \sum_{m\geqslant j, m\in
M_{k}^{+}}d_{m}\chi_{mr}(i)+\cdots
+\sum_{m\geqslant j, m\in M_{\overline{r}}^{+}}d_{m}\chi_{mr}(i)\\
 & & +\sum_{m\geqslant j, m\in M_{l}^{-}}d_{m}\chi_{mr}(i)+\cdots
+\sum_{m\geqslant j, m\in M_{\overline{r}}^{-}}d_{m}\chi_{mr}(i)\\
\left|\sum_{m\geqslant j}d_{m}\chi_{mr}(i)\right| & \leqslant &
\left|\sum_{m\geqslant j, m\in
M_{k}^{+}}d_{m}\chi_{mr}(i)\right|+\cdots+\left|\sum_{m\geqslant
j, m\in M_{\overline{r}}^{+}}d_{m}\chi_{mr}(i)\right|\\
 & & +\left|\sum_{m\geqslant j, m\in
 M_{l}^{-}}d_{m}\chi_{mr}(i)\right|+\cdots+\left|\sum_{m\geqslant j, m\in
 M_{\overline{r}}^{-}}d_{m}\chi_{mr}(i)\right|\\
 & \leqslant & 2\overline{r}-k-l+2.
\end{eqnarray*}
Similarly we also have
\begin{eqnarray*}
\left|\sum_{m<j}d_{m}\chi_{mr}(i)\right| & \leqslant &
\left|\sum_{m<j, m\in
M_{1}^{+}}d_{m}\chi_{mr}(i)\right|+\cdots+\left|\sum_{m<j, m\in M_{k-1}^{+}}d_{m}\chi_{mr}(i)\right|\\
 & & +\left|\sum_{m<j, m\in
 M_{1}^{-}}d_{m}\chi_{mr}(i)\right|+\cdots+\left|\sum_{m<j, m\in
 M_{l}^{-}}d_{m}\chi_{mr}(i)\right|\\
 & \leqslant & k+l-1.
\end{eqnarray*}
Thus $$\left|\sum_{m\geqslant
j}d_{m}\chi_{mr}(i)\right|+\left|\sum_{m<j}d_{m}\chi_{mr}(i)\right|\leqslant
2\overline{r}+1.$$ By (2.9), we certainly get
$$\left|\sum_{m\geqslant j}d_{m}\chi_{mr}(i)\right|=\left|\sum_{m<j}d_{m}\chi_{mr}(i)\right|.$$
Therefore $$\left|\sum_{m\geqslant
j}d_{m}\chi_{mr}(i)\right|\leqslant \overline{r}.$$ For the case
$j\notin M_{l}^{-}$, the argument is similar. Therefore (2.11)
yields $A(pqr)\leqslant \overline{r}$. On the other hand, by
Dirichlet's Prime Number Theorem, we know there exists a prime $s>q$
satisfying $\overline{s}=pq-\overline{r}$. That means $s\equiv
-r\pmod{pq}$, by Theorem 2.5 and the arguments above, we get
$$A(pqr)=A(pqs)\leqslant \overline{s}=pq-\overline{r}.$$
We have thus proved the theorem.
\end{proof}

\section{Main result}
Now to estimate the upper bound of $A(pqr)$, we need to investigate
the properties of the coefficients of $\Phi_{pq}$. First we
introduce some notation for the rest of the paper.

\textbf{Notation}\quad For any distinct primes $p$ and $q$, let
$q_{p}^{*}$ be the unique integer such that $0<q_{p}^{*}<p$ and
$qq_{p}^{*}\equiv 1\pmod{p}$. Let $\overline{q_{p}}$ be the unique
integer such that $0<\overline{q_{p}}<p$ and $q\equiv
\overline{q_{p}}\pmod{p}$.

About the coefficients of $\Phi_{pq}$, Lam and Leung [10] showed
\begin{theorem}[T.Y. Lam and K.H. Leung, 1996]
Let $\Phi_{pq}(x)=\sum_{m}d_{m}x^{m}$. For $0\leqslant m\leqslant
\phi(pq)$, we have

(A) $d_{m}=1$ if and only if $m=up+vq$ for some $u\in [0,
p_{q}^{*}-1]$ and $v\in [0, q_{p}^{*}-1]$;

(B) $d_{m}=-1$ if and only if $m+pq=u'p+v'q$ for some $u'\in
[p_{q}^{*}, q-1]$ and $v'\in [q_{p}^{*}, p-1]$;

(C) $d_{m}=0$ otherwise.

The numbers of terms of the former two kinds are, respectively,
$p_{q}^{*}q_{p}^{*}$ and $(q-p_{q}^{*})(p-q_{p}^{*})$, with
difference $1$ since $(p-1)(q-1)=(p_{q}^{*}-1)p+(q_{p}^{*}-1)q$.
\end{theorem}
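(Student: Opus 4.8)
The plan is to reduce the statement to a counting problem about nonnegative representations $n = up + vq$, via the classical generating-function identity for $\Phi_{pq}$. Starting from the factorization $\Phi_{pq}(x) = \Phi_{q}(x^{p})/\Phi_{q}(x)$ supplied by Proposition 1.2, one has
$$\Phi_{pq}(x) = \frac{(x^{pq}-1)(x-1)}{(x^{p}-1)(x^{q}-1)} = \frac{(1-x^{pq})(1-x)}{(1-x^{p})(1-x^{q})},$$
and I would expand this in $\mathbb{Z}[[x]]$ using $\frac{1}{(1-x^{p})(1-x^{q})} = \sum_{u,v\geqslant 0} x^{up+vq}$. The key observation is that $\deg\Phi_{pq} = \phi(pq) = (p-1)(q-1) < pq$, so the factor $(1-x^{pq})$ contributes nothing in degrees $\leqslant \phi(pq)$; hence for $0 \leqslant n \leqslant \phi(pq)$ one gets $d_{n} = r(n) - r(n-1)$, where $r(n)$ denotes the number of pairs $(u,v) \in \mathbb{Z}_{\geqslant 0}^{2}$ with $up+vq=n$, and $r(-1)=0$. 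This truncation is the one real idea of the proof; the rest is bookkeeping.

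Next I would establish a short representation lemma: for $0 \leqslant n < pq$ one has $r(n) \in \{0,1\}$, and when $r(n)=1$ the unique representation $n = up+vq$ has $u\in[0,q-1]$ and $v\in[0,p-1]$ — because $u\geqslant q$ or $v\geqslant p$ already forces $n\geqslant pq$, while uniqueness follows from $\gcd(p,q)=1$. Alongside it I would record the arithmetic identity $p_{q}^{*}p + q_{p}^{*}q = pq+1$, equivalently $(p_{q}^{*}-1)p + (q_{p}^{*}-1)q = (p-1)(q-1)$ (the last assertion of the theorem): the left-hand side is $\equiv 1$ modulo both $p$ and $q$ and lies strictly between $p+q$ and $2pq$, so it must be $pq+1$. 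As a byproduct, $a := (p_{q}^{*}p-1)/q = p - q_{p}^{*}$, which is what makes the next step close up.

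Finally, parts (A) and (B) come out of a single modular case analysis. Given a representable $n = up+vq$ with coordinates in the box, the neighbour $n-1$ satisfies $n-1 \equiv (u - p_{q}^{*})p \pmod q$: if $u \geqslant p_{q}^{*}$ then $n-1 = (u-p_{q}^{*})p + (v+a)q$ is a genuine nonnegative representation, so $d_{n} \leqslant 0$; if $u < p_{q}^{*}$ the only candidate representation of $n-1$ is $(u-p_{q}^{*}+q)p + (v-q_{p}^{*})q$, which is nonnegative exactly when $v \geqslant q_{p}^{*}$. Hence $d_{n} = 1$ precisely when $u \leqslant p_{q}^{*}-1$ and $v \leqslant q_{p}^{*}-1$ — and then automatically $n \leqslant (p_{q}^{*}-1)p + (q_{p}^{*}-1)q = \phi(pq)$, so $n$ is in range — which is (A). The mirror computation (fix $n-1 = up+vq$ representable and ask whether $n$ is representable, using $n \equiv (u+p_{q}^{*})p \pmod q$) yields $d_{n} = -1$ exactly when $u \leqslant q-1-p_{q}^{*}$ and $v \leqslant p-1-q_{p}^{*}$; rewriting $n+pq = (u+p_{q}^{*})p + (v+q_{p}^{*})q$ and using the identity turns this into the condition of (B). Part (C) is then immediate, and the two cardinalities $p_{q}^{*}q_{p}^{*}$ and $(q-p_{q}^{*})(p-q_{p}^{*})$ read off directly from (A) and (B), with difference $p_{q}^{*}q_{p}^{*} - (q-p_{q}^{*})(p-q_{p}^{*}) = p_{q}^{*}q + q_{p}^{*}p - pq = 1$. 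I expect the only delicate point to be keeping the two modular subcases and the tight boundary cases ($n$ close to $pq$, where the degree inequality is used at the very edge) straight; beyond that there is no substantive obstacle.
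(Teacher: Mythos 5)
The paper does not actually prove this statement: Theorem 3.1 is quoted as a known result of Lam and Leung and disposed of with a citation to [10], so there is no in-paper proof to compare against. Your argument, however, is correct and complete as a self-contained proof. The reduction $d_{n}=r(n)-r(n-1)$ for $0\leqslant n\leqslant\phi(pq)$ via $\Phi_{pq}(x)=(1-x^{pq})(1-x)\sum_{u,v\geqslant 0}x^{up+vq}$ is valid (the $x^{pq}$ term is indeed invisible below degree $pq$), the uniqueness of representations below $pq$ and the identity $p_{q}^{*}p+q_{p}^{*}q=pq+1$ are proved correctly, and the two modular subcases do close up: in the case $u\geqslant p_{q}^{*}$ one has $n-1=(u-p_{q}^{*})p+(v+p-q_{p}^{*})q$ with nonnegative coordinates, and in the case $u<p_{q}^{*}$ the only candidate $(u-p_{q}^{*}+q)p+(v-q_{p}^{*})q$ is admissible exactly when $v\geqslant q_{p}^{*}$, which gives (A); the mirror computation and the rewriting $n+pq=(u+p_{q}^{*})p+(v+q_{p}^{*})q$ give (B), and (C) and the two cardinalities follow. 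This is essentially equivalent to Lam and Leung's original argument, which instead exhibits $\Phi_{pq}$ directly as $\bigl(\sum_{v<q_{p}^{*}}x^{vq}\bigr)\bigl(\sum_{u<p_{q}^{*}}x^{up}\bigr)$ minus a complementary product shifted by $x^{-pq}$; your version trades that explicit factorization for the representation-counting function $r(n)$, which makes the "if and only if" in (A) and (B) slightly more transparent at the cost of the two-subcase bookkeeping you flagged. No gaps.
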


About $A(pqr)$, the best known general upper bound to date is due to
Bart{\l}omiej Bzd\c{e}ga [5]. He gave the following important result
\begin{theorem}[Bart{\l}omiej Bzd\c{e}ga, 2008]
Set $$\alpha=\min\{q_{p}^{*}, r_{p}^{*}, p-q_{p}^{*},
p-r_{p}^{*}\}$$ and $0<\beta<p$ satisfying $\alpha\beta qr\equiv
1\pmod{p}$. Put $\beta^{*}=\min\{\beta, p-\beta\}$. Then we have
$$A(pqr)\leqslant\min\{2\alpha+\beta^{*}, p-\beta^{*}\}.$$
\end{theorem}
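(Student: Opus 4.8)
The plan is to derive the bound from Lemma 2.3, i.e., from $A(pqr)=\max_{i,j}\bigl|\sum_{m\ge j}d_m\chi_{mr}(i)\bigr|$ (or, when the roles of $q$ and $r$ are interchanged, from the weaker inequality (2.12), see Remark 2.4, which already suffices for an upper bound), by feeding in the explicit description of the coefficient set of $\Phi_{pq}$ furnished by Theorem 3.1 and then running a sharper form of the block-counting used in the proofs of Theorems 2.6 and 2.7.

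First I would normalise. Both sides of the asserted inequality depend on $q$ and $r$ only through the unordered pairs $\{q^{*}_{p},p-q^{*}_{p}\}$ and $\{r^{*}_{p},p-r^{*}_{p}\}$, and both are symmetric in $q$ and $r$. By Theorem 2.5 we may replace $r$ by any prime congruent to $\pm r\pmod{pq}$, and, after interchanging the two non-minimal primes (which costs only the harmless weakening (2.12)), we may likewise adjust $q$; choosing the signs appropriately, we may thus assume $q^{*}_{p}\le r^{*}_{p}\le(p-1)/2$. Then $\alpha=q^{*}_{p}$, and since $\alpha\beta qr\equiv 1\pmod p$ forces $\beta\equiv r^{*}_{p}\pmod p$, also $\beta^{*}=r^{*}_{p}$. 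So it suffices to prove $A(pqr)\le\min\{2q^{*}_{p}+r^{*}_{p},\,p-r^{*}_{p}\}$.

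Now fix $i$ and $j$ and split $\sum_{m\ge j}d_m\chi_{mr}(i)$ according to the sign of $d_m$. By Theorem 3.1 the indices with $d_m=+1$ are the integers $m=up+vq$, $(u,v)\in[0,p^{*}_{q}-1]\times[0,q^{*}_{p}-1]$, and those with $d_m=-1$ are the integers $m=u'p+v'q-pq$, $(u',v')\in[p^{*}_{q},q-1]\times[q^{*}_{p},p-1]$; the restriction $m\ge j$ cuts each rectangle along a line $up+vq=\mathrm{const}$. Each of the two values $\chi_{mr}(i)=\pm 1$ is taken exactly when $mr$ lies in a certain window of $p$ consecutive residues modulo $pq$; by the Chinese Remainder Theorem such a window couples a length-$p$ interval of residues modulo $q$ (a constraint on $u$ alone, as $mr\equiv upr\pmod q$) to a single prescribed residue modulo $p$ (a constraint on $v$ alone, as $mr\equiv vqr\pmod p$). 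Since every column of either rectangle has height less than $p$, at most one entry of a given column can meet a given window. Consequently, after grouping $m$ into maximal runs of consecutive integers on which $\chi_{mr}(i)$ is constant and nonzero, each run contributes at most $1$ in absolute value to $\sum_{m\ge j}d_m\chi_{mr}(i)$, because the nonzero $d_m$ alternate in sign (Proposition 1.2); this is exactly the device behind Theorems 2.6 and 2.7.

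It then remains to bound the number of runs that genuinely contribute, i.e., those meeting $\{m\ge j\}$ in an odd number of nonzero $d_m$. I would estimate this in two ways. Directly, one organises the contributing runs from the two rectangles into a bounded family of clusters, controlled by $\alpha$ and $\beta^{*}$, and shows that internal cancellation within clusters leaves $\bigl|\sum_{m\ge j}d_m\chi_{mr}(i)\bigr|\le 2\alpha+\beta^{*}$. Alternatively, comparing $\sum_{m\ge j}$ with $\sum_{m<j}$ and symmetrising through $\sum_m d_m\chi_{mr}(i)=0$ (Lemma 2.2), the common absolute value is at most half of the total number of contributing runs, which one bounds by $2(p-\beta^{*})$. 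Taking the smaller of the two estimates and then the maximum over $i$ and $j$ yields $A(pqr)\le\min\{2q^{*}_{p}+r^{*}_{p},\,p-r^{*}_{p}\}=\min\{2\alpha+\beta^{*},\,p-\beta^{*}\}$. The main obstacle is exactly this counting step: pinning down how the sparse Lam--Leung support of the $d_m$ sits relative to the two residue windows and to the cutting line $up+vq=\mathrm{const}$, so that the number of contributing runs comes out as the sharp $2\alpha$, $\beta^{*}$ and $p-\beta^{*}$ rather than the crude $O(p)$ one gets by merely counting columns. This is the combinatorial heart of Bzd\c{e}ga's argument.
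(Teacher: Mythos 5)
There is a genuine gap, and you name it yourself: the entire content of the theorem is the quantitative counting step, and your proposal does not carry it out. Everything up to that point — reducing to $\max_{i,j}\bigl|\sum_{m\ge j}d_m\chi_{mr}(i)\bigr|$ via Lemma 2.3 / inequality (2.12), normalising with Theorem 2.5 so that $\alpha=q_p^*$ and $\beta^*=r_p^*$, invoking the Lam--Leung description of the support of the $d_m$, and observing that each "class" (fixed $v$ or $v'$) meets each of the two residue windows in at most one index $m$ — is sound setup, and it is exactly the machinery this paper assembles in Section 2 and uses in the proofs of Theorems 2.6, 2.7 and 3.3. But the trivial count from that setup is $p$ classes, each contributing at most one $+1$ and one $-1$, hence only $A(pqr)\le p$. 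The passage from there to the specific bounds $2\alpha+\beta^*$ and $p-\beta^*$ is precisely what must be proved, and your text for it ("one organises the contributing runs into a bounded family of clusters, controlled by $\alpha$ and $\beta^*$, and shows that internal cancellation leaves $\le 2\alpha+\beta^*$"; "the total number of contributing runs, which one bounds by $2(p-\beta^*)$") restates the desired conclusion rather than deriving it. In particular you never exhibit the pairing mechanism that forces $2\beta^*$ of the $2p$ potential contributions to cancel (the analogue of relation (3.5), $\chi_{mr}(i)=-1\Longleftrightarrow\chi_{(m-r_p^*q)r}(i)=1$, which shifts a class by $r_p^*$), nor any argument producing the factor $2\alpha$. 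For what it is worth, the paper itself offers no proof to compare against: Theorem 3.2 is quoted from Bzd\c{e}ga's preprint [5] and used as a black box, so the burden of the combinatorial heart cannot be discharged by appeal to anything in this text either.

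One smaller point of caution in the part you did write: the window in which $mr$ must lie for $\chi_{mr}(i)=\pm1$ consists of $p$ consecutive residues modulo $pq$, so modulo $p$ it covers \emph{every} residue exactly once; the correct statement is that for each fixed $v$ (which fixes $mr\bmod p$) the window determines $mr\bmod{pq}$, hence $m$, uniquely. Your phrasing ("a single prescribed residue modulo $p$") is at best ambiguous and at worst inverts the roles of the two moduli; since the whole counting argument lives or dies on how these windows intersect the Lam--Leung rectangles, this needs to be stated precisely before the cluster analysis could even begin.
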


We can now prove our main result.
\begin{theorem}
Let $p<q<r$ be odd primes. Suppose $\min\{\overline{q_{p}},
p-\overline{q_{p}}, \overline{r_{p}}, p-\overline{r_{p}}\}>
\frac{p-1}{3}$, then $A(pqr)\leqslant \frac{p+\beta^{*}}{2}$.
\end{theorem}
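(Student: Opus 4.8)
The plan is to run the same counting machinery as in the proof of Theorem~2.8, but to organize the partition of $[0,\phi(pq)]$ around the hypothesis that all of $\overline{q_p},p-\overline{q_p},\overline{r_p},p-\overline{r_p}$ exceed $(p-1)/3$. The key point is that this hypothesis controls how the blocks $M_k^{+}$ and $M_l^{-}$ of consecutive integers on which $\chi_{mr}$ is $+1$ (resp. $-1$) can interleave: since $\chi_{mr}(i)$ depends only on $\overline{mr}$ and $\overline{i}$, the "jumps" in $m$ between consecutive $+1$-blocks (and between consecutive $-1$-blocks) are governed, modulo $p$, by multiplication by $\overline{r_p}$, and the lengths of the blocks are governed by $\overline{q_p}$ as in Lam--Leung (Theorem~3.1). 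Under the stated lower bound on these residues, the number of blocks of each sign is essentially $\overline{r}$ but with strong restrictions near the index $j$, so I expect to be able to bound $|\sum_{m\ge j}d_m\chi_{mr}(i)|+|\sum_{m<j}d_m\chi_{mr}(i)|$ by roughly $p+\beta^{*}$ rather than by $2\overline{r}+1$. Combined with $\sum_m d_m\chi_{mr}(i)=0$ from Lemma~2.2, this gives $|\sum_{m\ge j}d_m\chi_{mr}(i)|\le (p+\beta^{*})/2$, and then Lemma~2.3 (or the trivial bound (2.12) together with Remark~2.4) yields $A(pqr)\le (p+\beta^{*})/2$.

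First I would recall from Bzd\c{e}ga's setup (Theorem~3.2) that $\beta$ satisfies $\alpha\beta qr\equiv 1\pmod p$ where, under our hypothesis, $\alpha=\min\{q_p^{*},r_p^{*},p-q_p^{*},p-r_p^{*}\}$; the hypothesis $\min\{\overline{q_p},p-\overline{q_p},\overline{r_p},p-\overline{r_p}\}>(p-1)/3$ translates (via $qq_p^{*}\equiv 1$, $rr_p^{*}\equiv 1$) into the same kind of lower bound on $q_p^{*},r_p^{*}$ and their complements, hence $\alpha>(p-1)/3$, i.e. $\alpha\ge \lceil (p+2)/3\rceil$ since $p$ is prime. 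Next I would set up, for fixed $i$ and fixed $j$ with $0<j\le\phi(pq)$, the two partitions of $[0,\phi(pq)]$ into blocks carrying $\chi_{mr}=+1$ and $\chi_{mr}=-1$ exactly as in the proof of Theorem~2.8, so that $|\sum_{m\ge j, m\in M_k^{+}}d_m\chi_{mr}(i)|\le 1$ for each block (Proposition~1.2), and similarly for the $M_l^{-}$. The difference from Theorem~2.8 is the bookkeeping: instead of naively summing $\overline{r}$ ones from each sign, I would show that the blocks straddling or following $j$ on the $+$ side and the $-$ side cannot both be "long" near $j$, and that the total count of relevant nonzero blocks is at most $p+\beta^{*}$ — here is precisely where $\beta^{*}$ enters, as it measures the residue of the relevant jump $\alpha qr$ and hence pins down how the last partial block contributes.

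The main obstacle I anticipate is the combinatorial lemma bounding the number of sign-blocks that actually meet $\{m\ge j\}$ together with those that meet $\{m<j\}$: proving that this total is $\le p+\beta^{*}+1$ and not merely $\le 2\overline{r}+1$. This requires a careful analysis of the orbit of $m\mapsto \overline{mr}$ and how the Lam--Leung rectangle $[0,p_q^{*}-1]\times[0,q_p^{*}-1]$ for $d_m=1$ (and the complementary rectangle for $d_m=-1$) is swept out as $m$ increases, using the lower bound on all four residues to rule out the interleavings that would produce too many blocks. Once that counting lemma is in hand, the rest is the same telescoping-with-(2.9) trick already used twice in Section~2, so I would expect the write-up to reduce to: (i) translate the hypothesis to $\alpha>(p-1)/3$ and identify $\beta^{*}$; (ii) state and prove the block-counting lemma; (iii) split at $j$, bound each side, add, use $\sum_m d_m\chi_{mr}(i)=0$ to halve; (iv) invoke Lemma~2.3 / Remark~2.4 to conclude $A(pqr)\le (p+\beta^{*})/2$.
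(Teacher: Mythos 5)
There is a genuine gap: your write-up is a plan rather than a proof, and the one step that carries all the difficulty is exactly the step you leave unproven. You defer everything to a ``block-counting lemma'' asserting that the number of sign-blocks meeting $\{m\geqslant j\}$ plus those meeting $\{m<j\}$ is at most $p+\beta^{*}+1$, and you explicitly flag this as the main obstacle without giving an argument for it. It is not at all clear that the block decomposition of Theorem~2.7 can be refined to yield this count: in that theorem the natural bound on the number of blocks of each sign is $\overline{r}$, which has no a priori relation to $p+\beta^{*}$, and nothing in your sketch explains how the hypothesis on $\overline{q_{p}},\overline{r_{p}}$ would cut the block count down. The paper's proof does not count blocks of consecutive $m$ at all; it argues by contradiction, using the Lam--Leung parametrization to partition the nonzero terms of $\Phi_{pq}$ into $p$ classes indexed by $v$ (resp.\ $v'$), each containing at most one term with $\chi_{mr}(i)=1$ and at most one with $\chi_{mr}(i)=-1$. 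Assuming $A(pqr)>\frac{p+\beta^{*}}{2}$ forces more than $\beta^{*}$ ``special'' classes, and a chain of congruences modulo $pq$ (relations (3.5) and (3.8)--(3.14)) then forces $3\overline{q_{p}}\leqslant p-1$ or one of the symmetric inequalities, contradicting the hypothesis. None of this structure is present in your outline.

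A second, concrete error: you claim the hypothesis $\min\{\overline{q_{p}},p-\overline{q_{p}},\overline{r_{p}},p-\overline{r_{p}}\}>\frac{p-1}{3}$ translates into the same lower bound for $q_{p}^{*},r_{p}^{*}$ and their complements, hence $\alpha>\frac{p-1}{3}$. This is false: the condition on the residue of $q$ modulo $p$ does not transfer to its inverse. For $p=7$ and $\overline{q_{7}}=3$ one has $q_{7}^{*}=5$ and $\min\{q_{7}^{*},p-q_{7}^{*}\}=2=\frac{p-1}{3}$, not strictly greater; indeed in the paper's Theorem~3.5 the relevant case has $\alpha=\beta^{*}=2=\frac{p-1}{3}$ exactly. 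So even the preliminary ``translation'' step of your plan is wrong, and the central combinatorial lemma remains to be both stated precisely and proved before the proposal could be considered a proof.
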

\begin{proof}
Let us first assume $$1\leqslant p-q_{p}^{*}\leqslant
r_{p}^{*}<p-r_{p}^{*}\leqslant q_{p}^{*}\leqslant p-1.\eqno(3.1)$$
According to Theorem 3.2, it follows $\alpha=p-q_{p}^{*}$,
$\beta=p-r_{p}^{*}$ and $\beta^{*}=r_{p}^{*}$. Suppose
$A(pqr)>\frac{p+\beta^{*}}{2}$, so we easily get
$$\frac{p+\beta^{*}}{2}<p-\beta^{*}.$$ This implies that $$\beta^{*}<\frac{1}{3}p.\eqno(3.2)$$
By (2.12), we know there exist a pair of integers $i, j$ such that
$$\left|\sum_{m\geqslant
j}d_{m}\chi_{mr}(i)\right|>\frac{p+\beta^{*}}{2}.\eqno(3.3)$$ By
Theorem 3.1, we can divide the nonzero terms of $\Phi_{pq}(x)$ into
$p$ classes depending on the value of $v$ or $v'$. From the
definition of $\chi_{n}$, we can simply verify that for any given
class, there is at most one term such that $\chi_{mr}(i)=1$. For the
case $\chi_{mr}(i)=-1$, we have the similar result. By (2.9) and
(3.3), we immediately obtain
$$\left|\sum_{m<j}d_{m}\chi_{mr}(i)\right|>\frac{p+\beta^{*}}{2}.\eqno(3.4)$$ This
implies that the number of the nonzero terms of $\Phi_{pq}(x)$ such
that $\chi_{mr}(i)=\pm 1$ is more than $p+\beta^{*}$. Therefore
there are more than $\beta^{*}$ classes such that each of them has
two terms $d_{m}x^{m}$ and $d_{m'}x^{m'}$ such that $\chi_{mr}(i)=1$
and $\chi_{m'r}(i)=-1$ respectively. Moreover, $m$ and $m'$ should
satisfy $m\geqslant j, m'<j$ or $m<j, m'\geqslant j$, otherwise
$d_{m}\chi_{mr}(i)+d_{m'}\chi_{m'r}(i)=0$, thus their contributions
to the left sides of (3.3) and (3.4) are both zero. For convenience
of description, We call them the special classes.

Now we claim that the special classes contain not only the classes
of $d_{m}=1$, but also the classes of $d_{m}=-1$. In fact, by
Theorem 3.1, the number of the classes of $d_{m}=-1$ is just
$p-q_{p}^{*}\leqslant \beta^{*}$, so the special classes must
contain the classes of $d_{m}=1$. If there are more than $\beta^{*}$
classes of $d_{m}=1$ in the special classes, then it implies that
there exist $u_{1}\in[0, p_{q}^{*}-1], v_{1}\in[0, q_{p}^{*}-1]$
such that
$$u_{1}p+v_{1}q\geqslant j,$$ $u_{2}\in[0, p_{q}^{*}-1],
v_{2}\in[0, q_{p}^{*}-1]$ such that
$$u_{2}p+v_{2}q<j$$ and $$v_{2}-v_{1}\geqslant \beta^{*}.$$
This yields
$$(u_{1}-u_{2})p+(v_{1}-v_{2})q>0,$$hence
$$(u_{1}-u_{2})p>(v_{2}-v_{1})q\geqslant \beta^{*}q.$$
On the other hand,
$$(u_{1}-u_{2})p\leqslant
(p_{q}^{*}-1)p=(p-q_{p}^{*})q-p+1\leqslant \beta^{*}q-p+1.$$ The
equality holds because $(p-1)(q-1)=(p_{q}^{*}-1)p+(q_{p}^{*}-1)q$.
Therefore we derive a contradiction and prove our claim.

Since $(up+vq)r+p\equiv (up+(v-r_{p}^{*})q)r+p+q\pmod{pq}$, we have
$$\chi_{mr}(i)=-1\Longleftrightarrow \chi_{(m-r_{p}^{*}q)r}(i)=1.\eqno(3.5)$$
We claim that $$\sum_{m\geqslant
j}d_{m}\chi_{mr}(i)<-\frac{p+\beta^{*}}{2}.\eqno(3.6)$$ By (3.3), we
know $\sum_{m\geqslant j}d_{m}\chi_{mr}(i)>\frac{p+\beta^{*}}{2}$ or
$\sum_{m\geqslant j}d_{m}\chi_{mr}(i)<-\frac{p+\beta^{*}}{2}$. If
the former holds, then $$\sum_{m\geqslant j,
d_{m}=1}d_{m}\chi_{mr}(i)>\frac{p-\beta^{*}}{2}>\beta^{*}\eqno(3.7)$$
because
$$\sum_{m\geqslant j, d_{m}=-1}d_{m}\chi_{mr}(i)\leqslant
\beta^{*}.$$ Thus there must exist $u\in[0, p_{q}^{*}-1]$ and
$v\in[0, q_{p}^{*}-1-\beta^{*}]$ such that $\chi_{(up+vq)r}(i)=1$.
By (3.5), we have $\chi_{(up+(v+r_{p}^{*})q)r}(i)=-1$ and
$v+r_{p}^{*}\in[0, q_{p}^{*}-1]$ since $\beta^{*}=r_{p}^{*}$. Hence
$d_{up+vq}\chi_{(up+vq)r}(i)+d_{up+(v+r_{p}^{*})q}\chi_{(up+(v+r_{p}^{*})q)r}(i)=0$,
their contributions to the left side of (3.7) are zero. This is a
contradiction, so we establish the second claim.

Combining the above arguments yields there exist $u_{1}, u_{2}\in[0,
p_{q}^{*}-1]$ and $v\in[0, q_{p}^{*}-1]$ such that
$u_{1}p+vq\geqslant j>u_{2}p+vq$, $\chi_{(u_{1}p+vq)r}(i)=-1$ and
$\chi_{(u_{2}p+vq)r}(i)=1$. This implies that
$$\overline{(u_{1}p+vq)r+p+\overline{q_{p}}}=\overline{(u_{2}p+vq)r+p+q}\eqno(3.8)$$
or
$$\overline{(u_{1}p+vq)r+p-(p-\overline{q_{p}})}=\overline{(u_{2}p+vq)r+p+q}.\eqno(3.9)$$
Similarly, we also have there exist $u'_{1}, u'_{2}\in[p_{q}^{*},
q-1]$ and $v'\in[q_{p}^{*}, p-1]$ such that $u'_{1}p+v'q-pq\geqslant
j>u'_{2}p+v'q-pq$, $\chi_{(u'_{1}p+v'q-pq)r}(i)=1$ and
$\chi_{(u'_{2}p+v'q-pq)r}(i)=-1$. This implies that
$$\overline{(u'_{1}p+v'q-pq)r+p+q-\overline{q_{p}}}=\overline{(u'_{2}p+v'q-pq)r+p}\eqno(3.10)$$
or
$$\overline{(u'_{1}p+v'q-pq)r+p+q+(p-\overline{q_{p}})}=\overline{(u'_{2}p+v'q-pq)r+p}.\eqno(3.11)$$
If (3.8) and (3.10) hold simultaneously, then we get
$$\overline{(u_{1}+u'_{1})pr}=\overline{(u_{2}+u'_{2})pr}.$$
Hence $$q\mid (u_{1}+u'_{1}-u_{2}-u'_{2}).$$ This contradicts
$0<u_{1}+u'_{1}-u_{2}-u'_{2}\leqslant q-2$. Similarly (3.9) and
(3.11) can not hold simultaneously, so without loss of generality,
we assume (3.9) and (3.10) are correct. By (3.5), we have
$\chi_{(u'_{2}p+v'q-pq-r_{p}^{*}q)r}(i)=1$ since
$\chi_{(u'_{2}p+v'q-pq)r}(i)=-1$. It follows that the class of
$v'-r_{p}^{*}\in[0, q_{p}^{*}-1]$ does not contain a term such that
$\chi_{mr}(i)=1$ since $\chi_{((u'_{2}-q)p+(v'-r_{p}^{*})q)r}(i)=1$.
If it does not contain a term such that $\chi_{mr}(i)=-1$ either,
then the contributions of this class to the left sides of (3.3) and
(3.4) are both zero. It is easy to verify that there must exist a
special class of $v'\in[q_{p}^{*}, p-1]$ such that the class of
$v'-r_{p}^{*}$ contains a term such that $m\geqslant j$ and
$\chi_{mr}(i)=-1$. This implies that there exist $u_{3}\in[0,
p_{q}^{*}-1]$ such that $u_{3}p+(v'-r_{p}^{*})q\geqslant j$ and
$\chi_{(u_{3}p+(v'-r_{p}^{*})q)r}(i)=-1$, so
$$\overline{(u_{3}p+(v'-r_{p}^{*})q)r+p+\overline{q_{p}}}=\overline{((u'_{2}-q)p+(v'-r_{p}^{*})q)r+p+q}\eqno(3.12)$$
or
$$\overline{(u_{3}p+(v'-r_{p}^{*})q)r+p-(p-\overline{q_{p}})}=\overline{((u'_{2}-q)p+(v'-r_{p}^{*})q)r+p+q}.$$
If the latter holds, by (3.9) we get
$$\overline{(u_{3}-u_{1})pr}=\overline{(u'_{2}-q-u_{2})pr}.$$
Hence $$q\mid (u_{3}+u_{2}-u_{1}-u'_{2}).\eqno(3.13)$$ On the other
hand, by $$u_{3}p+(v'-r_{p}^{*})q\geqslant j>u'_{2}p+v'q-pq$$ we
have $$0>(u_{3}-u'_{2})p>(r_{p}^{*}-p)q.$$ Note that
$$0>(u_{2}-u_{1})p\geqslant
-(p_{q}^{*}-1)p=-(p-q_{p}^{*})q+p-1\geqslant -r_{p}^{*}q+p-1,$$ so
we can get $$0>(u_{3}+u_{2}-u_{1}-u'_{2})p>-pq+p-1.$$ This
contradicts (3.13) and establishes the validity of (3.12).

Similarly we have $\chi_{(u'_{1}p+v'q-pq+r_{p}^{*}q)r}(i)=-1$
because $\chi_{(u'_{1}p+v'q-pq)r}(i)=1$. The class of
$v'-p+r_{p}^{*}\in[0, q_{p}^{*}-1]$ does not contain a term such
that $\chi_{mr}(i)=-1$, but it contains a term such that
$\chi_{mr}(i)=1$. This implies that there exist $u_{4}\in[0,
p_{q}^{*}-1]$ such that $u_{4}p+(v'-p+r_{p}^{*})q<j$ and
$\chi_{(u_{4}p+(v'-p+r_{p}^{*})q)r}(i)=1$, so we can get
$$\overline{(u'_{1}p+v'q-pq+r_{p}^{*}q)r+p+\overline{q_{p}}}=\overline{(u_{4}p+(v'-p+r_{p}^{*})q)r+p+q}.\eqno(3.14)$$
Combining (3.10), (3.12) and (3.14), we obtain
$$\overline{(u_{3}p+(v'-r_{p}^{*})q)r+p+3\overline{q_{p}}}=\overline{(u_{4}p+(v'-p+r_{p}^{*})q)r+p+q}.$$
Since $\chi_{(u_{3}p+(v'-r_{p}^{*})q)r}(i)=-1$ and
$\chi_{(u_{4}p+(v'-p+r_{p}^{*})q)r}(i)=1$, we know
$$\overline{((u_{4}p+(v'-p+r_{p}^{*})q)r+p+q)-((u_{3}p+(v'-r_{p}^{*})q)r+p)}\leqslant p-1$$
or
$$\overline{((u_{3}p+(v'-r_{p}^{*})q)r+p)-((u_{4}p+(v'-p+r_{p}^{*})q)r+p+q)}\leqslant p-1.$$
That is, $$3\overline{q_{p}}\leqslant p-1$$ or
$$pq-3\overline{q_{p}}\leqslant p-1.$$ Note that $0<\overline{q_{p}}<p$, hence it is obvious that the former
inequality holds. Therefore $\overline{q_{p}}\leqslant
\frac{p-1}{3}$.

If (3.8) and (3.11) hold simultaneously, we similarly get
$p-\overline{q_{p}}\leqslant \frac{p-1}{3}$.

For the cases $$1\leqslant q_{p}^{*}\leqslant
r_{p}^{*}<p-r_{p}^{*}\leqslant p-q_{p}^{*}\leqslant p-1,$$
$$1\leqslant p-q_{p}^{*}\leqslant p-r_{p}^{*}<r_{p}^{*}\leqslant
q_{p^{*}}\leqslant p-1$$ and $$1\leqslant q_{p}^{*}\leqslant
p-r_{p}^{*}<r_{p}^{*}\leqslant p-q_{p}^{*}\leqslant p-1,$$ we can
get the above results similarly. Observe that, we can immediately
obtain the remaining four cases provided that $q_{p}^{*}$ and
$r_{p}^{*}$ interchange. In these cases, by Remark 2.4, it is not
difficult to establish $\overline{r_{p}}\leqslant\frac{p-1}{3}$ or
$p-\overline{r_{p}}\leqslant\frac{p-1}{3}$ similarly. Combining the
above arguments yields $$\min\{\overline{q_{p}}, p-\overline{q_{p}},
\overline{r_{p}}, p-\overline{r_{p}}\}\leqslant \frac{p-1}{3}.$$
This is a contradiction and completes the proof of the theorem.
\end{proof}

\begin{corollary}
Let $p<q<r$ be odd primes. Suppose $\min\{\overline{q_{p}},
p-\overline{q_{p}}, \overline{r_{p}}, p-\overline{r_{p}}\}>
\frac{p-1}{3}$, then $A(pqr)\leqslant \frac{2}{3}p$.
\end{corollary}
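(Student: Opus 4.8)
The plan is to derive the corollary directly from Theorem 3.3 together with the unconditional bound of Bzd\c{e}ga (Theorem 3.2), using a short case split on the size of $\beta^{*}$. Under the standing hypothesis $\min\{\overline{q_{p}}, p-\overline{q_{p}}, \overline{r_{p}}, p-\overline{r_{p}}\}>\frac{p-1}{3}$, Theorem 3.3 gives $A(pqr)\leqslant\frac{p+\beta^{*}}{2}$, while Theorem 3.2 gives $A(pqr)\leqslant p-\beta^{*}$ with no extra hypothesis. Hence $A(pqr)\leqslant\min\{\frac{p+\beta^{*}}{2},\,p-\beta^{*}\}$, and it remains only to bound this minimum by $\frac{2}{3}p$.

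For that I would note that the two bounds cross at $\beta^{*}=\frac{p}{3}$. If $\beta^{*}\leqslant\frac{p}{3}$, then $\frac{p+\beta^{*}}{2}\leqslant\frac{p+p/3}{2}=\frac{2}{3}p$; if instead $\beta^{*}\geqslant\frac{p}{3}$, then $p-\beta^{*}\leqslant p-\frac{p}{3}=\frac{2}{3}p$. Since one of these two cases must occur, $A(pqr)\leqslant\frac{2}{3}p$ in either case, which is exactly the assertion. (One may observe in passing that at $\beta^{*}=\frac{p}{3}$ both quantities equal $\frac{2}{3}p$, so the estimate is internally consistent; and since $\beta^{*}$ is an integer, for $p>3$ there is no boundary subtlety at all.)

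I do not expect any real obstacle: all the substantive work lies in Theorem 3.3, and the corollary is merely the remark that the conditional bound $\frac{p+\beta^{*}}{2}$ handles small $\beta^{*}$ while the unconditional bound $p-\beta^{*}$ handles large $\beta^{*}$, the worst case occurring where they meet. The only point worth verifying carefully is that Theorem 3.2 is genuinely available here — it is, as its statement requires nothing beyond $p<q<r$ odd primes — so that the term $p-\beta^{*}$ can be invoked precisely on the range $\beta^{*}>\frac{p}{3}$ where Theorem 3.3 alone would not suffice.
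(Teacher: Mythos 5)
Your proposal is correct and is essentially the paper's own argument: the paper likewise combines Theorem 3.2 and Theorem 3.3 to get $A(pqr)\leqslant\min\{p-\beta^{*},\frac{p+\beta^{*}}{2}\}\leqslant\frac{2}{3}p$, with your case split on $\beta^{*}\lessgtr\frac{p}{3}$ being just the explicit verification of that last inequality. No issues.
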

\begin{proof}
By Theorem 3.2 and 3.3, we have
$$A(pqr)\leqslant\min\{p-\beta^{*}, \frac{p+\beta^{*}}{2}\}\leqslant\frac{2}{3}p.$$
\end{proof}

Now we can show in the special case $p=7$ that both Beiter's
conjecture and the Corrected Beiter conjecture are correct.
\begin{theorem}
We have $M(7)=4$.
\end{theorem}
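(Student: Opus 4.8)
The plan is to establish $M(7)\leqslant 4$ and then exhibit a single triple $7<q<r$ with $A(7qr)=4$. The lower bound is the easy half: one simply searches small primes (for instance $q=43$, $r=47$ or another pair known from the literature of Gallot--Moree) and computes $\Phi_{7qr}(x)$ directly, recording a coefficient equal to $4$ in absolute value; this already shows Beiter's conjecture $A(pqr)\leqslant\frac{p+1}{2}=4$ is tight for $p=7$, and it is the value $M(7)=4$ claimed. So the substance is entirely in the upper bound $A(7qr)\leqslant 4$ for all odd primes $7<q<r$.

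For the upper bound I would split on the residue $\overline{r_{7}}$ (equivalently on the symmetric quantity $\min\{\overline{q_7},7-\overline{q_7},\overline{r_7},7-\overline{r_7}\}$). If this minimum is $>\frac{p-1}{3}=2$, i.e. it equals $3$, then Corollary 3.5 gives directly $A(7qr)\leqslant \frac{2}{3}\cdot 7 = \frac{14}{3}$, hence $A(7qr)\leqslant 4$ since the coefficient is an integer. So the whole difficulty is concentrated in the complementary range, where $\min\{\overline{q_7},7-\overline{q_7},\overline{r_7},7-\overline{r_7}\}\in\{1,2\}$. Here I would invoke Bzd\c{e}ga's bound (Theorem 3.3): with $\alpha=\min\{q_7^*,r_7^*,7-q_7^*,7-r_7^*\}$ and $\beta^*$ as defined there, $A(7qr)\leqslant\min\{2\alpha+\beta^*,\,7-\beta^*\}$. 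Note $\alpha$ (being a minimum of four things summing in pairs to $7$) satisfies $\alpha\leqslant 3$, and when the minimum above is small, $\alpha$ is small — specifically, observe that $q_7^*$ runs over the same residues mod $7$ as $\overline{q_7}$ does (inversion permutes $\{1,\dots,6\}$), so a small value of $\min\{\overline{q_7},7-\overline{q_7},\overline{r_7},7-\overline{r_7}\}$ does not by itself control $\alpha$; rather one must directly enumerate the finitely many residue patterns of $(q\bmod 7, r\bmod 7)$, of which there are only $6\times 6=36$ (and far fewer up to the symmetries $q\leftrightarrow r$ and $r\mapsto -r$ afforded by Theorem 2.5 and the $\beta^*$-symmetry). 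For each pattern one computes $\alpha$ and $\beta$, checks $2\alpha+\beta^*\leqslant 4$ or $7-\beta^*\leqslant 4$; the only patterns not immediately killed are those with $\alpha=3$, which are exactly the ones covered by Corollary 3.5, and a small residual set with $\alpha\leqslant 2$ but $\beta^*\leqslant 2$ as well, where $7-\beta^*\geqslant 5$ fails and $2\alpha+\beta^*$ might reach $5$ or $6$.

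The hard part, therefore, is precisely this residual set of residue patterns where neither Corollary 3.5 nor the raw Bzd\c{e}ga bound suffices. For these I would return to the explicit formula of Lemma 2.3, $A(7qr)=\max_{i,j}\bigl|\sum_{m\geqslant j}d_m\chi_{mr}(i)\bigr|$, together with Lam--Leung's description (Theorem 3.1) of the $d_m$: there are exactly $p_q^*q_7^*$ terms with $d_m=+1$ and $(q-p_q^*)(7-q_7^*)$ terms with $d_m=-1$, arranged in $7$ classes indexed by $v$ (resp. $v'$), each class contributing at most one $+1$-term and at most one $-1$-term to any partial sum $\sum_{m\geqslant j}d_m\chi_{mr}(i)$ of a fixed sign — this is the class-counting mechanism already used in the proof of Theorem 3.4. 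Running that argument with $p=7$ makes the cancellation completely explicit: in each of the finitely many residual residue patterns one checks by hand that at most four classes can simultaneously contribute $+1$ (and at most four $-1$) to the relevant partial sum, with the $\beta^*$-pairing $\chi_{mr}(i)=-1\Leftrightarrow\chi_{(m-r_7^*q)r}(i)=1$ of (3.5) forcing the extra cancellations exactly as in Theorem 3.4. I expect each residual case to be a short finite verification, and the bookkeeping — keeping track of which residue patterns survive the two general bounds and organizing them under the symmetries — to be the only real obstacle.
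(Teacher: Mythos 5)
Your overall strategy for the upper bound --- combine Theorem 3.3/Corollary 3.5 with Bzd\c{e}ga's bound (Theorem 3.2) by splitting on the residues of $q,r$ modulo $7$ --- is essentially the paper's, which runs it in the contrapositive: if $A(7qr)\geqslant 5$, then Theorem 3.2 forces $\beta^{*}\leqslant 2$ and $2\alpha+\beta^{*}\geqslant 5$, and since $\alpha\leqslant\beta^{*}$ (because $\beta^{*}$ is the inverse-minimum attached to whichever of $q,r$ does not realize $\alpha$) this leaves only $\alpha=\beta^{*}=2$, i.e.\ $q,r\equiv\pm3\pmod 7$; that is exactly the hypothesis of Theorem 3.3, which gives $A(7qr)\leqslant\frac{9}{2}$, a contradiction. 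The lower bound in the paper is simply M\"{o}ller's theorem quoted in the introduction ($A(pqr)\geqslant\frac{p+1}{2}$ for infinitely many pairs), which is cleaner than your proposed unverified computation with a guessed pair such as $(q,r)=(43,47)$.

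The genuine problem is in your case bookkeeping and in the fact that the cases you believe survive are never actually handled. Since $q\equiv\pm3\pmod 7$ iff $q_{7}^{*}\equiv\mp2\pmod 7$, the patterns covered by Corollary 3.5 (those with $\min\{\overline{q_{7}},7-\overline{q_{7}},\overline{r_{7}},7-\overline{r_{7}}\}=3$) are precisely those with $\alpha=\beta^{*}=2$, not those with $\alpha=3$ as you assert; conversely $\alpha=3$ means $q,r\equiv\pm2\pmod 7$, which lies in your ``complementary range'' but is harmless since then $\beta^{*}=3$ and $7-\beta^{*}=4$. If you actually carry out the enumeration you describe, using $\alpha\leqslant\beta^{*}$, the pairs $(\alpha,\beta^{*})$ give the bounds $3,4,4,5,4,4$ for $(1,1),(1,2),(1,3),(2,2),(2,3),(3,3)$ respectively, so the only pattern Bzd\c{e}ga fails to kill is $(2,2)$ --- which is exactly the Corollary 3.5 case. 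Thus your ``residual set'' is empty, but as written you neither perform this enumeration correctly nor prove anything about the residual cases you posit: you only sketch an expectation that a class-counting verification in the style of Theorem 3.4 ``should'' be a short finite check. Since on your own account the whole difficulty of the theorem is concentrated there, the upper bound $M(7)\leqslant 4$ is not actually established by the proposal; fixing it requires either redoing the residue analysis as above (after which no new argument is needed) or genuinely executing the case verifications you defer.
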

\begin{proof}
Suppose there exists a pair of primes $7<q<r$ such that
$A(7qr)\geqslant 5$. Then by Theorem 3.2, we must have
$\alpha=\beta^{*}=2$. It implies that $q, r\equiv \pm 3\pmod{7}$,
hence $\min\{\overline{q_{7}}, 7-\overline{q_{7}}, \overline{r_{7}},
7-\overline{r_{7}}\}> \frac{7-1}{3}=2$. By Theorem 3.3, we have
$A(7qr)\leqslant \frac{7+2}{2}<5$. This is a contradiction, so
$M(7)\leqslant 4$. Recall that M\"{o}ller [12] indicated that for
any prime $p$ there are infinitely many pairs of primes $q<r$ such
that $A(pqr)\geqslant \frac{p+1}{2}$. Therefore we have $M(7)=4$.
\end{proof}

\textbf{Acknowledgements.} The authors would like to thank the
referee for several helpful suggestions.

\end{document}